\theoremstyle{definition}
\newtheorem{thm}{Theorem}[section]
\newtheorem{coro}[thm]{Corollary}
\newtheorem{lemma}[thm]{Lemma}
\newtheorem{prop}[thm]{Proposition}
\newtheorem{remark}[thm]{Remark}
\newtheorem{defn}[thm]{Definition}
\newcommand{\Pic}{{\rm Pic}}
\newcommand{\Br}{{\rm Br}}
\newcommand{\Spec}{{\rm Spec}}
\newcommand{\bfA}{\mathbf{A}}
\newcommand{\calO}{\mathcal{O}}
\newcommand{\calG}{\mathcal{G}}
\newcommand{\calH}{\mathcal{H}}
\newcommand{\calX}{\mathcal{X}}
\newcommand{\St}{\mathrm{St}}
\newcommand{\rH}{\mathrm{H}}
\newcommand{\rHet}{\mathrm{H}_{\mathrm{\acute{e}t}}}
\newcommand{\Ker}{\mathrm{Ker}}
\renewcommand{\d}{\mathrm{d}}
\newcommand{\bF}{\mathbb{F}}
\newcommand{\laurent}[1]{(\!(#1)\!)}
\newcommand{\bA}{\mathbb{A}}
\newcommand{\Spin}{\mathrm{Spin}}
\newcommand{\Mod}[1]{\ (\mathrm{mod}\ #1)}
\newcommand{\vol}{\mathrm{vol}}
\DeclareFontFamily{U}{wncy}{}
\DeclareFontShape{U}{wncy}{m}{n}{%
	<5>wncyr5%
		<6>wncyr6%
	<7>wncyr7%
	<8>wncyr8%
	<9>wncyr9%
	<10>wncyr10%
	<11>wncyr10%
	<12>wncyr6%
	<14>wncyr7%
	<17>wncyr8%
	<20>wncyr10%
	<25>wncyr10}{}
\DeclareMathAlphabet{\cyrille}{U}{wncy}{m}{n}
\def\Sha{\cyrille X}
\apptocmd{\theliography}{\setlength{\itemsep}{0pt}}{}{}
\begin{document}
\title{Counting integral points in homogeneous spaces over function fields}

\author{Sheng Chen and Jing Liu\footnote{Corresponding author.}}
\date{}

\maketitle
\begin{abstract}We establish the asymptotic formula for the number of integral points in non-compact symmetric homogeneous spaces of semi-simple simply connected algebraic groups over global function fields, given by the sum of the products of local densities twisted by suitable Brauer elements. 
\end{abstract}

MSC 2020: 11D45, 11G35, 14G05, 14G12, 14M17


Key words: Brauer--Manin obstruction, strong approximation, integral points, Tamagawa measure, homogeneous spaces.

\section{Introduction} \label{sec-introduction}

The Hardy--Littlewood circle method is classically used for counting integral points on varieties. But for this method to be applicable, one necessary condition is that the variety in question satisfies the Hasse principle. Aiming to go beyond this limitation, Borovoi and Rudnick \cite{Borovoi-Rudnick95} introduced the so-called class of Hardy--Littlewood varieties, for which the integral Hasse principle might not hold, yet still the difference between the asymptotic behavior of their integral points and the Hardy--Littlewood expectation could be described in terms of some density function on the adelic space. In particular, they showed that certain homogeneous spaces of semi-simple groups over number fields are Hardy--Littlewood. These density functions, however, were described using the Kottwitz invariant and not determined locally. Relating this counting problem to strong approximation with Brauer--Manin obstruction, Wei and Xu \cite{Wei-Xu16} reinterpreted the asymptotic formula for integral points as the sum of the products of local densities twisted by Brauer elements. This enabled them to carry out explicit computations, e.g., for the variety of matrices with a fixed characteristic polynomial.  

In this note we complement the work of Wei and Xu by establishing the corresponding asymptotic formula for homogeneous spaces over global function fields (see Theorem \ref{th1} and \ref{th2}). Our result rests on the work by Demarche and Harari \cite{Demarche-Harari22} on strong approximation over global function fields and the equidistribution property developed in \cite{Benoist-Oh12}. 

\subsection*{Notations and conventions}

Let $F$ be a
global function field of characteristic $p$ with constant field $\mathbb{F}_q$. We denote by $\eta_F$ the genus of $F$. Let $\Omega_F$ be the set of all places of $F$. For a place $v\in \Omega_F$, let $F_v$ be the $v$-adic completion of $F$ with ring of integers $\calO_v$. For any
finite subset $S$ of $\Omega_F$, the ring of
$S$-integers is the collection of elements in $F$ which are integral
outside $S$ and is denoted by $\calO_S$. Denote the ad\`ele ring
of $F$ by $\bfA_F$ and we also put $F_S\coloneqq\prod_{v\in S}F_v$.

Let $\overline{F}$ be a fixed separable closure of $F$. For any variety $Z$ over $F$, we write $\overline{Z}=Z\times_F \overline{F}$, and set
\[
\Br(Z)=\rHet^2(Z, \Bbb G_m),  \ \ \ \Br_1(Z)= \Ker[\Br(Z)\rightarrow \Br(\overline{Z})].
\] 
Let $G$ be a connected reductive linear algebraic group and $\calX$
be a separated scheme of finite type over $\calO_S$ whose generic fiber
\[
X={\calX} \times_{\calO_S} F  \cong H \backslash G
\] is a right homogeneous space of $G$ admitting a point $P\in X(F)$ with $H$ as its stabilizer.
The rational point $P$ determines a map $\pi\colon G\to X, \ g\mapsto P\cdot g,$ which induces the following commutative diagram \[
\xymatrix{
\Br(X) \ar[d] \ar[r]^{\pi^*} & \Br(G) \ar[d]\\
\Br(\overline{X}) \ar[r]^{\pi^*} & \Br(\overline{G}).
}
\]

Set \[
\Br_1(X, G)=\Ker[\Br(X)\to \Br(\overline{G})].
\]Then $\Br_1(X,G)$ is a subgroup of $\Br(X)$ containing $\Br_1(X)$.
Following \cite{Demarche-Harari22}, we further consider the subgroup $\Br_{1,P}(X, G)$ of $\Br_1(X, G)$ consisting of those elements $\alpha $ such that $\alpha(P)=0$. Quotienting by $\Br(F)$ induces an isomorphism $\Br_{1,P}(X, G)\cong \Br_1(X, G)/\Br(F)$. 


The obvious necessary condition
for ${\calX}(\calO_S)\neq \emptyset$ is
\begin{equation}\label{loc} \prod_{v\in (\Omega_F \setminus  S)} {\calX}(\calO_v)\neq \emptyset, \end{equation} which is
assumed throughout this paper. By separatedness of $\calX$, one can naturally regard $\calX(\calO_v)$
as an open and compact subset of $X(F_v)$ with $v$-adic
topology for $v\notin S$. We assume that $X$ is affine and fix some closed immersion
\begin{equation}\label{coordi} X \hookrightarrow  \Spec(F[x_1, \cdots, x_n]) . \end{equation}  By \cite[Theorem~A]{Richardson77}, $X$ being affine is equivalent to $H$ being reductive. For simplicity, we will also assume that $H$ is connected. 

Any point in $X(E)$ can be regarded as a point in $E^n$ under the closed immersion (\ref{coordi}) for any $F$-algebra $E$. For a point $x=(x_v)\in X(F_S)=\prod_{v\in S}X(F_v)$, we define its height to be \[
H_S(x)\coloneqq\prod_{v\in S}H_v(x_v),
\] where $H_v(x_v)=\max_{1\leqslant i\leqslant n}\{|z_i^v|_v\}$ if $x_v$ corresponds to $(z_1^v,\ldots,z_n^v)\in F_v^n$ under the chosen embedding.

In particular, for a rational point $x\in X(F)$ with coordinate $(z_1, \ldots, z_n)\in F^n$, one has
\[
H_S(x)= \prod_{v\in S}\max_{1\leqslant i\leqslant n} \{ \ |z_i|_v  \} .
\] 

For $n>0$, set
\[
N({\calX}, q^n)= \# \{x\in{\calX} (\calO_S): \  H_S(x)\leqslant q^n\}
\] and $$X(F_S , q^n)= \{ x\in X(F_S): \ H_S(x)\leqslant q^n \}. $$


Write the Brauer--Manin pairing (see \cite[\S~5.2]{Skorobogatov01}) in a multiplicative way as
$$ \aligned   X (\bfA_F) \times \Br(X) & \longrightarrow \mu_\infty = \varinjlim_{n} \mu_n \subset \mathbb{C}^\times, \\
((x_v)_{v\in \Omega_F}, \alpha ) & \mapsto \prod_{v\in \Omega_F} \alpha (x_v) \endaligned
$$
where the $\alpha(x_v)$'s are all roots of unity and $\alpha(x_v)=1$ for almost all $v\in \Omega_F$ via the natural isomorphism $$\mathbb Q/ \mathbb Z \xrightarrow{\cong} \mu_\infty . $$ Then one can view any element in $\Br(X)$ as a locally constant $\mathbb{C}$-valued function on $X(\bfA_F)$. For any subset $B$ of $\Br(X)$, we write
$$ X(\bfA_F)^{B} = \{ (x_v)\in X(\bfA_F): \ ((x_v),\alpha)=1 \ \text{for all $\alpha\in B$} \} . $$

\begin{defn} For any $\xi\in \Br(X)$, define
\[
I_v ({\calX},  \xi)= \int_{{\calX}(\calO_v)} \xi
\d m^X_v
\] for any $v\in \Omega_F\backslash S$ and
$$ I_{S}(X, q^n,\xi) = \int_{X(F_S, q^n)} \xi \d m^X_{S} $$ for $n>0$
(see Definition \ref{tam} for the precise description of the measures $ m^X_v$ and $m^X_S$).
\end{defn}

\subsection*{Main result}
The main result of this paper is the following asymptotic formula.
\begin{thm}\label{th1}
Let $G$ be a simply connected and almost $F$-simple linear algebraic group over $F$ such that $G(F_S)$ is not compact. Let $H$ be a subgroup of fixed points of some involution of $G$. Set $X=H\backslash G$ and let $\calX$ be a finite-type separated scheme over $\calO_S$ whose generic fiber is $X$. Suppose that $H$ is connected and has no non-trivial $F$-characters. Then we have
\[
N({\calX}, q^n) \sim r_H\cdot q^{(1-\eta_F)\dim X}\sum_{\xi\in \Br_{1,P}(X,G)} (\prod_{v \notin S}  I_v(\calX{}, \xi)) \cdot I_{S} (X, q^n, \xi)
\]
as $n\rightarrow \infty$, where the constant $r_H$ is to be defined in (\ref{ri}).
\end{thm}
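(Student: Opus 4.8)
The plan is to follow the now-standard strategy that reduces a counting asymptotic to an equidistribution statement plus a description of strong approximation with Brauer--Manin obstruction, adapting the number field arguments of Borovoi--Rudnick and Wei--Xu to the function field setting using Demarche--Harari and Benoist--Oh. First I would fix the Tamagawa measure $m^X = \prod_v m^X_v$ on $X(\bfA_F)$ coming from an invariant gauge form on $X$ (such a form exists, up to a unit, since $H$ has no nontrivial $F$-characters), so that the product $\prod_{v\notin S} I_v(\calX,\xi)\cdot I_S(X,q^n,\xi)$ is literally $\int_{\calX(\bfO_{\Omega\setminus S})\times X(F_S,q^n)} \xi\, \d m^X$, an integral of a locally constant function over a region of $X(\bfA_F)$ that grows with $n$. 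The convergence factors and the power $q^{(1-\eta_F)\dim X}$ are exactly the normalization appearing in the Tamagawa number formalism over $F$ (the genus term $1-\eta_F$ replacing the discriminant factor of the number field case).

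The core reduction is then: $N(\calX,q^n)$ counts $\calO_S$-points in the orbit, i.e. rational points of $X$ lying in $\calX(\bfO_{\Omega\setminus S})\times X(F_S,q^n)$; one wants to show these rational points equidistribute in the adelic region $\calX(\bfO_{\Omega\setminus S})\times X(F_S,q^n)$ against the Tamagawa measure, but only after accounting for the fact that $X(F)$ is not dense in $X(\bfA_F)$ — its closure is $X(\bfA_F)^{\Br_1(X,G)}$ by the Demarche--Harari strong approximation theorem for homogeneous spaces of simply connected groups over function fields. The key computation is that for a locally constant function $f$ on $X(\bfA_F)$ supported on an appropriate region, the "expected main term" obtained from equidistribution on the homogeneous space $X$ is
\[
\int_{X(\bfA_F)^{\Br_1(X,G)}} f \, \d m^X \;=\; \frac{1}{|\Br_1(X,G)/\Br(F)|}\sum_{\xi\in\Br_{1,P}(X,G)} \int_{X(\bfA_F)} \xi\, f\,\d m^X ,
\]
by orthogonality of characters (each $\xi\in\Br_{1,P}(X,G)$, viewed as a locally constant function via the Brauer--Manin pairing, is the indicator-type projector onto the $\Br_1(X,G)$-orthogonal part). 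Applying this with $f$ the characteristic function of $\calX(\bfO_{\Omega\setminus S})\times X(F_S,q^n)$ and absorbing the normalizing constant $|\Br_1(X,G)/\Br(F)|^{-1}$, together with the Tamagawa volume factors, into the constant $r_H$ of (\ref{ri}), produces exactly the claimed sum over $\xi\in\Br_{1,P}(X,G)$ of $\prod_{v\notin S} I_v(\calX,\xi)\cdot I_S(X,q^n,\xi)$.

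To make the equidistribution rigorous I would proceed in steps: (i) decompose $\calX(\bfO_{\Omega\setminus S})\times X(F_S,q^n)$ into finitely many cosets of a suitable compact open subgroup $U\subset G(\bfA_F^S)$ times a Borel region at the places in $S$, reducing to counting rational points in $X(F)\cap \big(x_0 \cdot (U\times B_n)\big)$ for a fixed $x_0$; (ii) pull back along $p:G\to X$ and translate the problem to counting lattice points $\Gamma\cap (\text{something})$ for an $S$-arithmetic subgroup $\Gamma$ of $G$, handling the fibers of $p$ (which are $H$-orbits) via the volume of $H$ — this is where the factor $r_H$ and the Tamagawa number of $H$ enter; (iii) invoke the equidistribution theorem of Benoist--Oh (\cite{Benoist-Oh12}) for the $F_S$-points, using that $G(F_S)$ is noncompact and $G$ is almost $F$-simple so that mixing/unipotent-flow input applies, to get that the $S$-adic projections of $\Gamma$-orbits equidistribute against the invariant measure on each $G(F_v)$-homogeneous piece; (iv) upgrade to the full adelic statement by combining with the finite-places strong approximation description from \cite{Demarche-Harari22}, which cuts the adelic orbit closure down to $X(\bfA_F)^{\Br_1(X,G)}$ and is precisely what makes the Brauer sum appear rather than a single product of local densities.

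The main obstacle I expect is step (iii)--(iv): controlling the error term in the equidistribution uniformly as the height bound $q^n\to\infty$, and in particular checking that the "boundary" of the expanding region $X(F_S,q^n)$ is negligible (a well-roundedness / volume-of-boundary estimate for the height balls $H_S(x)\le q^n$ on the $S$-arithmetic homogeneous space), and that the relevant integrals $I_S(X,q^n,\xi)$ genuinely grow (so the main term dominates) — this requires knowing the height zeta function / volume growth on the symmetric space $X$ and that $\xi$ being locally constant does not cause cancellation killing the main term. A secondary technical point is verifying that the invariant measure and convergence-factor bookkeeping over the function field $F$ yields exactly the power $q^{(1-\eta_F)\dim X}$; this is a matter of tracking the product formula and the Euler--Poincaré / Tamagawa normalization and should be routine once the number-field template of \cite{Borovoi-Rudnick95} and \cite{Wei-Xu16} is transcribed, but it must be done carefully because the genus contributes through the adelic volume of $F^n$ relative to $\bfA_F^n$.
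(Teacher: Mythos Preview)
Your overall strategy matches the paper's: reduce to Benoist--Oh equidistribution at the $S$-places, use Demarche--Harari to identify which adelic orbits contain integral points, and recover the Brauer sum by orthogonality of characters. However, there is a real gap in your treatment of the constants. You write that the factor $|\Br_{1,P}(X,G)|^{-1}$ coming from the orthogonality relation is ``absorbed into $r_H$''; this cannot be right, since $r_H$ is by definition the residue of an Artin $L$-function attached to the character lattice of $H$ and has nothing a priori to do with $\Br_{1,P}(X,G)$. What actually happens is that the counting side, after summing over all $\Gamma$-orbits, produces an explicit factor of $\#\Pic(H)$: one decomposes each $\St(\calX)$-class of integral points first into $G(F)$-orbits (there are exactly $\#\Sha^1(F,H)$ of these, by a twisting argument), and then each $G(F)$-orbit into finitely many $\Gamma$-orbits indexed by a double coset space of the stabilizer $H_{ij}$, an inner form of $H$. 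Summing the equidistribution main terms over these $\Gamma$-orbits reconstitutes the full Tamagawa number $\tau(H_{ij})=\tau(H)$ (inner-form invariance, Rosengarten), and then the function-field Ono--Sansuc formula gives $\#\Sha^1(F,H)\cdot\tau(H)=\#\Pic(H)$. Since $\Br_{1,P}(X,G)\cong\Pic(H)$ when $G$ is simply connected (from Sansuc's exact sequence and $\Pic(G)=0$), this $\#\Pic(H)$ cancels exactly the $\#\Br_{1,P}(X,G)$ from orthogonality. The factor $r_H$ enters separately and honestly, from the convergence factors in the Tamagawa measure on $H_{ij}$ (equal to those for $H$ since inner forms share character lattices), and $\tau(G)=1$ (Gaitsgory--Lurie) is needed to eliminate the denominator $m^G_S(\Gamma\backslash G(F_S))\prod_{v\notin S}m^G_v(\St(\calX)_v)$.

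A related point: your step (ii), ``pull back along $p:G\to X$ and handle the fibers via the volume of $H$'', is too coarse. Different $G(F)$-orbits in $X(F)$ have stabilizers that are genuinely different inner forms $H_{ij}$ of $H$, and it is the bijection between $\Gamma$-orbits inside a fixed $G(F)$-orbit and double cosets $H_{ij}(F)\backslash H_{ij}(\bfA_F)/(H_{ij}(\bfA_F)\cap\St(\calX))$ that allows the sum over $\Gamma$-orbits to rebuild $\tau(H_{ij})$. Without this three-level orbit bookkeeping the constants will not match and you will not see why $\#\Br_{1,P}(X,G)$ disappears from the final formula.
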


\begin{remark}
    Compared to the number field case (see \cite[Theorem~1.5]{Wei-Xu16}), a notable difference is that only the elements in $\Br_{1,P}(X,G)$, instead of the whole Brauer group $\Br(X)$, contribute to our formula. The reason for this phenomenon is that, for affine varieties over function fields, the Brauer group is too large so that any adelic point orthogonal to the whole Brauer group is already a rational point (\cite[Proposition~4.3]{Harari-Voloch13}). As a consequence, one has to use the group $\Br_{1,P}(X,G)$ instead of $\Br(X)$ in Proposition~\ref{equiv} which is crucial for the proof of Theorem~\ref{th1}.
\end{remark}

The organization of the paper is as follows. In \S~\ref{sec-measure}, we recall some necessary facts about Tamagawa measure on homogeneous spaces. In \S~\ref{subsec-orb}, we follow the approach of \cite{Wei-Xu16} to obtain a natural action on $\calX(\calO_S)$ by a suitable arithmetic group $\Gamma$, and describe the collection of orbits. Then in \S~\ref{subsec-counting}, we invoke the equidistribution result \cite{Benoist-Oh12}, which helps to count inside a single $\Gamma$-orbit, thus establishing the asymptotic formula of $N({\calX}, q^n)$ by using Brauer–Manin obstruction. In the final \S~\ref{sec-example}, the representation space of an indefinite integral quadratic form is studied as a concrete example.

\subsection*{Acknowledgements}
The authors would like to thank Prof. Fei Xu for helpful discussions.

\section{Homogeneous space and Tamagawa measure}\label{sec-measure}

For later use, we collect in this section some facts about Tamagawa measure on homogeneous spaces.
Basic references are \cite[Section 1]{Borovoi-Rudnick95}, \cite[Chapter~I]{Oester84} and \cite[Chapter II]{Weil82}.

\begin{defn}\label{def:gauge_form}
For a geometrically integral smooth algebraic variety $V$ over $F$,
a gauge form on $V$ is an algebraic differential form of degree $\dim(V)$, everywhere regular and nonzero.
\end{defn}
Given a connected reductive group $G$ and a connected reductive subgroup $H$ of $G$. Then $G$ and $H$ admit invariant gauge forms. Moreover, the homogeneous space $X=H\backslash G$ admits a $G$-invariant gauge form (cf. \cite[\S 2.4]{Weil82}).

\begin{defn}[{\cite[\S 2.4, p24]{Weil82}}]\label{deg:algmatch}Invariant gauge forms $\omega_G$ on $G$, $\omega_H$ on $H$, and $G$-invariant gauge form $\omega_X$ on $X$ are said to match together algebraically if $\omega_G=\omega_X\cdot \omega_H$.
\end{defn}
For any choice of invariant gauge forms $\omega_G$ on $G$ and $\omega_H$ on $H$, we may multiply a given $\omega_X$ by a suitable constant in $F^\times$ so that $\omega_G = \omega_X \cdot \omega_H$.

Given a gauge form  $\omega_V$ on a variety $V$ over $F$, one can define a measure $|\omega_V|_v$ on $V(F_v)$ for each $v \in \Omega_F$ (see \cite[\S 2.2.1]{Weil82}). The following lemma will be used without explict mention in our proof of the main theorem. For reference see the discussion in \cite[the first paragraph of p.47]{Borovoi-Rudnick95} or \cite[Proposition 2.5]{Lee25}.
\begin{lemma}
Suppose the invariant gauge forms $\omega_G$ on $G$, $\omega_H$ on $H$ and $\omega_X$ on $X$ match together algebraically. Then for any $v\in\Omega_F$ we have
    \begin{equation}\label{eq:topologicallymatch}
    \int_{G(F_v)}f(g)\ \d |\omega_G|_v=\int_{H(F_v)\backslash G(F_v)} \d |\omega_X|_v\int_{H(F_v)}f(hg) \ \d |\omega_H|_v.
    \end{equation}
    For a triple of measures $(|\omega_G|_v,|\omega_H|_v,|\omega_X|_v)$ satisfying (\ref{eq:topologicallymatch}), we say that $|\omega_G|_v,|\omega_H|_v,$ and $|\omega_X|_v$ match together topologically.
\end{lemma}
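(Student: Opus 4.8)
The statement to prove is the disintegration identity \eqref{eq:topologicallymatch}: if $\omega_G = \omega_X \cdot \omega_H$ algebraically, then the measure $|\omega_G|_v$ on $G(F_v)$ decomposes as an integral over $H(F_v)\backslash G(F_v)$, with respect to $|\omega_X|_v$, of the push-forwards of $|\omega_H|_v$ along the fibers. Let me sketch how I'd approach this.

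Let me think about what's really going on here. We have a fibration $G \to X = H\backslash G$ (wait, it's a right homogeneous space, so $H$ acts on the left, $G$ on the right; the map is $g \mapsto Hg$). Each fiber is a coset $Hg$, which is a torsor under $H$. The algebraic relation $\omega_G = \omega_X \cdot \omega_H$ is a statement about differential forms: pulling back $\omega_X$ and wedging with $\omega_H$ (extended $G$-invariantly along fibers... wait, $\omega_H$ is on $H$, need to be careful) recovers $\omega_G$. This is fundamentally a local statement that can be checked on a Zariski-open cover where the fibration is trivial, and then the $v$-adic measure identity follows by a Fubini-type argument plus a partition of unity.

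So here's the structure. First, I'd want to say: the identity is local on $X$, so I can work over a Zariski-open $U \subseteq X$ over which $p: G \to X$ admits a section $\sigma: U \to G$, giving an isomorphism $p^{-1}(U) \cong H \times U$ (as varieties over $F$, via $(h,u) \mapsto h\cdot\sigma(u)$). Under this isomorphism I need to track what happens to the three gauge forms. The form $\omega_H$ on $H$ pulls back to the "vertical" part; the section $\sigma$ identifies $U$ with a subvariety of $G$ transverse to fibers, and $\omega_X|_U$ transported via $\sigma$ gives a "horizontal" complement. The algebraic matching condition $\omega_G = \omega_X \cdot \omega_H$ is precisely the statement that under this product decomposition, $\omega_G$ corresponds to the product form $\omega_H \boxtimes (\sigma^*\text{-transported }\omega_X)$ — up to a unit, and the matching condition pins down that unit to be $1$ (this uses $G$-invariance: the constant can't depend on the point). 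Here I should be a little careful about how "$\omega_G = \omega_X\cdot\omega_H$" is to be read — it's the definition given in Definition \ref{deg:algmatch}, essentially a relation in $\bigwedge^{\dim G}\Omega_{G/F}$ after choosing compatible local coordinates coming from a local splitting; I'd cite Weil \cite[\S 2.4]{Weil82} for the precise bookkeeping rather than reproving it.

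Second, with the product structure in hand, the measure-theoretic statement is a genuine Fubini theorem: for a gauge form $\omega = \omega_1 \boxtimes \omega_2$ on a product $V_1 \times V_2$, one has $|\omega|_v = |\omega_1|_v \times |\omega_2|_v$ as measures on $V_1(F_v)\times V_2(F_v)$ — this is immediate from the definition of $|\omega_V|_v$ via local coordinates (Weil \cite[\S 2.2]{Weil82}), since in product coordinates the top form factors and the local absolute-value-of-Jacobian construction is multiplicative. Applying this over $p^{-1}(U)\cong H\times U$ gives \eqref{eq:topologicallymatch} for functions $f$ supported over $U$, with $H(F_v)\backslash G(F_v)$ restricted to $U(F_v)$ and $\d|\omega_X|_v$ the measure there. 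Finally, to pass from $U$ to all of $X$: cover $X$ by finitely many such opens $U_i$ (quasi-compactness), take a partition of unity on $G(F_v)$ or on $X(F_v)$ subordinate to the $p^{-1}(U_i)$, and sum. Both sides of \eqref{eq:topologicallymatch} are linear in $f$ and in the measure, so additivity over the partition closes the argument; the left translation-invariance of all three measures (which follows from invariance of the gauge forms) ensures the inner integral $\int_{H(F_v)} f(hg)\,\d|\omega_H|_v$ is well-defined as a function on the coset space, independent of the coset representative.

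The main obstacle — the only genuinely delicate point — is the bookkeeping in the first step: making precise how the algebraic identity $\omega_G = \omega_X\cdot\omega_H$ translates, under a local splitting $p^{-1}(U)\cong H\times U$, into the product decomposition of forms with unit constant exactly $1$ (rather than some nowhere-vanishing regular function that would then have to be shown constant via $G$-invariance and connectedness). This is exactly the content of Weil's discussion in \cite[\S 2.4, p.~24]{Weil82}, so in the interest of not reproving standard material I would invoke that reference for the form-level identity and devote the written proof to the Fubini step and the partition-of-unity globalization, which are where the $v$-adic measures actually enter.
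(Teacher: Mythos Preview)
Your sketch is correct and is essentially the standard argument going back to Weil \cite[\S 2.4]{Weil82}. The paper itself does not give a proof at all: it simply cites \cite[p.~47]{Borovoi-Rudnick95} and \cite[Proposition~3.3]{Jeon} for the statement. So you have done more than the paper does here.

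One small point worth tightening: you propose working over a Zariski-open $U\subseteq X$ where $p:G\to X$ admits a section. For a general connected reductive $H$, the $H$-torsor $G\to H\backslash G$ need not be Zariski-locally trivial (it is \'etale-locally trivial). This is harmless for the measure-theoretic conclusion, since what you actually need is a local section in the $v$-adic analytic topology on $X(F_v)$, and that exists by smoothness of $p$ and the implicit function theorem over $F_v$. Replacing ``Zariski-open $U$'' by ``$v$-adic open $U\subseteq X(F_v)$'' throughout, your Fubini-plus-partition-of-unity argument goes through verbatim. You might also note explicitly that the image of $G(F_v)\to X(F_v)$ is the open subset $H(F_v)\backslash G(F_v)$, so the right-hand side of \eqref{eq:topologicallymatch} really is an integral over that image rather than over all of $X(F_v)$.
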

For a connected reductive group $G$, let $\rho_G$ denote the representation of $\mathrm{Gal}(\overline{F}/F)$ in the space $\widehat{G}(\overline{F})\otimes\mathbb{C}$ and let $t_G$ be the rank of $\widehat{G}(F)$, where $\widehat{G}(\overline{F})$ and $\widehat{G}(F)$ denote the group of $\overline{F}$-characters and the group of $F$-characters of $G$, respectively. Let $L_v(\rho_G, s)$ be the Euler factor at $v$ of the Artin $L$-function associated with $\rho_G$.  We define the following factors:
\begin{equation}\label{ri}
 \lambda_v^G=L_v(\rho_G, 1)^{-1}, \ \ \  r_G=\lim_{s\rightarrow 1}(s-1)^{t_G}L_S(\rho_G,s),
\end{equation} 
where $S$ is a finite set of places of $\Omega_F$ and $L_S(\rho_G, s)=\prod\limits_{v\in \Omega_F\setminus S}L_v(\rho_G, s)$ which admits a meromorphic continuation to the whole complex plane with a pole at 
$s=1$ of order equal to the rank $t_G$.
\begin{remark}\label{rmk-inner form}
 If $G$ is semi-simple, one has $\lambda_v^G=1$ and $r_G=1$. If $G'$ is an inner form of $G$, one has $\lambda_v^G=\lambda_v^{G'}$ and $r_G=r_{G'}$.    
\end{remark}
 
\begin{defn}\label{tam}
(1) Let $\omega_{G}$ be an invariant gauge form on $G$.
The Tamagawa measure $m^G$ on $G(\bfA_F)$ is defined as
\[
m^G=m_S^G\cdot r_G^{-1}q^{(1-\eta_F)\dim G}\prod_{v\in \Omega_F\setminus S}m^G_v,
\]
where
\[
\left\{
\begin{array}{l}
     m^G_v=(\lambda_v^G)^{-1}|\omega_G|_v;\\
     m_S^G=\prod_{v\in S}|\omega_G|_v.\\
\end{array}
\right.
\]
The Tamagawa measure $m^G$ is independent of choices of $\omega_G$ and $S$
(cf. \cite[chapter I, \S 4.7]{Oester84}).  
For an id\`ele $a \in \bfA^{\times}_F$ its norm is defined by $|a|=\prod_{v\in \Omega_F}|a_v|_v$. Let $G(\bfA_F)^1$ denote the set of all $b\in G(\bfA_F)$ such that $|\chi(b)|=1$ for any $F$-character $\chi:G \to \mathbb{G}_m$. The Tamagawa number of $G$ is then defined by $\tau(G)=m^G(G(F)\backslash G(\bfA_F)^1)$.

(2) Let $\omega_X$ be a $G$-invariant gauge form on $X\cong H\backslash G$.
The Tamagawa measure $m^X$ on $X({\bf A}_F)$ is defined as
\begin{align}\label{Tam-measure}
    m^X=m_S^X\cdot r_X^{-1}q^{(1-\eta_F)\dim X}\prod_{v\in\Omega_F\backslash S} m_{v}^X,
\end{align}
where similarly
\[
\left\{
\begin{array}{l}
     m_{v}^X=(\lambda_v^X)^{-1}|\omega_X|_v; \\
     m_S^X=\prod_{v\in S}|\omega_X|_v;\\
\end{array}
\right.
\]
and $r_X=\dfrac{r_G}{r_H}$, $\lambda_v^X=\dfrac{\lambda_v^G}{\lambda_v^H}$.
\end{defn}
From now on, we always fix an invariant gauge form $\omega_G$ on $G$, an invariant gauge form $\omega_H$ on $H$, and a $G$-invariant gauge form $\omega_X$ on $X$ so that $\omega_G=\omega_X\cdot \omega_H$.
Then the local measures $m^G_v$ on $G(F_v)$, $m^H_v$ on $H(F_v)$, and $m^X_v$ on $X(F_v)$ match together topologically.

\section{Counting integral points}\label{sec-counting}

\subsection{Orbits}\label{subsec-orb}
Fix a finite set $S_0$ of places containing $S$ such that there are group schemes $\calG$ and $\calH$ of finite type over $\calO_{S_0}$ with generic fibers $G$ and $H$ respectively and satisfying \[\calX_{\calO_{S_0}}\cong\calH\backslash\calG\] and $P\in\calX(\calO_v)$ for all $v\notin S_0$. For each $v\in S_0\backslash S$, fix a group scheme $\calG_v$ of finite type over $\calO_v$ such that $\calG_v\times_{\calO_v}F_v\cong G\times_{F}F_v$.

\begin{defn}
    For each $v\in\Omega_F$, define
    \[
    \St(\calX)_v\coloneqq
    \begin{cases}
        \calG(\calO_v), &v\notin S_0\\
        \{g\in\calG_v(\calO_v): \calX(\calO_v)=\calX(\calO_v)\cdot g\}, &v\in S_0\backslash S\\
        G(F_v). &v\in S
    \end{cases}
    \]
\end{defn}

Then $\St(\calX)_v$ is an open subgroup of $G(F_v)$ for each $v\in\Omega_F$ and it is compact for $v\notin S$. By definition, $\calX(F_v)$, resp. $\calX(\calO_v)$, is stabilized by $\St(\calX)_v$ under the action of $G$ on $X$ for $v\in S$, resp. $v\notin S$.

\begin{lemma}\label{localorb}
    If $H$ is reductive, then
    \begin{enumerate}
        \item[(1).] $\#[\calX(F_v)/\St(\calX)_v]$ is finite for any $v\in S$.
        \item[(2).] $\#[\calX(\calO_v)/\St(\calX)_v]$ is finite for any $v\in\Omega_F\setminus S$.
    \end{enumerate}
\end{lemma}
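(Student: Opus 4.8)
I would prove the two parts by rather different means: (2) is a soft topological argument, while (1) reduces to the finiteness of a Galois cohomology set.

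\emph{Part (2).} Fix $v\in\Omega_F\setminus S$. Recall from the discussion above that $\calX(\calO_v)$ is open and compact inside $X(F_v)$, that it is stable under $\St(\calX)_v$, and that $\St(\calX)_v$ is an open subgroup of $G(F_v)$. For $x\in X(F_v)$ consider the orbit morphism $p_x\colon G\to X$, $g\mapsto x\cdot g$; it is a torsor under the stabilizer $G_x$ of $x$, which is an $F_v$-form of $H$ and hence smooth, so $p_x$ is smooth and the induced map $G(F_v)\to X(F_v)$ is open (a submersion of $F_v$-analytic manifolds is an open map). Thus the orbit $x\cdot\St(\calX)_v=p_x\bigl(\St(\calX)_v\bigr)$ is open in $X(F_v)$, and when $x\in\calX(\calO_v)$ it is contained in and open in $\calX(\calO_v)$. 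Consequently the $\St(\calX)_v$-orbits meeting $\calX(\calO_v)$ form an open cover of the compact space $\calX(\calO_v)$ by pairwise disjoint sets, so there can be only finitely many of them, which is (2). Note this uses the hypothesis on $H$ only through smoothness.

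\emph{Part (1).} Fix $v\in S$, so that $\St(\calX)_v=G(F_v)$ and we must bound the number of $G(F_v)$-orbits on $X(F_v)$. Here I would invoke the standard torsor-theoretic dictionary (see e.g. \cite[\S~2]{Skorobogatov01}): the quotient map $p\colon G\to X\cong H\backslash G$ is a left $H$-torsor, and sending $x\in X(F_v)$ to the class of the fibre $p^{-1}(x)$ yields a map of pointed sets $X(F_v)\to \rH^1(F_v,H)$ whose fibres are exactly the $G(F_v)$-orbits (and whose image is $\Ker[\rH^1(F_v,H)\to \rH^1(F_v,G)]$). Hence $\#\bigl[X(F_v)/G(F_v)\bigr]\le \#\rH^1(F_v,H)$, and it remains to see that $\rH^1(F_v,H)$ is finite. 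For $H$ connected reductive this is standard: dévissage along $1\to H^{\mathrm{der}}\to H\to H^{\mathrm{tor}}\to 1$ and $1\to\mu\to H^{\mathrm{sc}}\to H^{\mathrm{der}}\to 1$ reduces the finiteness to that of $\rH^1(F_v,H^{\mathrm{tor}})$ and $\rH^2(F_v,\mu)$ (both finite by local duality over the non-archimedean local field $F_v$) together with the vanishing $\rH^1(F_v,H^{\mathrm{sc}})=0$; the perfect case is treated the same way with $H^{\mathrm{der}}=H$.

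The main obstacle I anticipate is in part (1) and is specific to characteristic $p$: the vanishing $\rH^1(F_v,H^{\mathrm{sc}})=0$ over a local field of positive characteristic is not Kneser's classical theorem but the later result of Bruhat--Tits and Harder, and one must apply the local finiteness statements in the flat topology rather than the étale one when the centre $\mu$ is a non-smooth finite group scheme, i.e. when $p$ divides its order. Part (2), by contrast, needs nothing beyond compactness of $\calX(\calO_v)$ and openness of orbits.
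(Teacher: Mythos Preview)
Your argument matches the paper's strategy: part~(2) via openness of orbits (smoothness of the orbit map) plus compactness of $\calX(\calO_v)$, and part~(1) via the bound $\#[X(F_v)/G(F_v)]\le\#\rH^1(F_v,H)$. The only difference is that the paper disposes of the finiteness of $\rH^1(F_v,H)$ by citing \cite[Prop.~7.1.3]{Conrad12} and \cite[Prop.~A.2.11]{Conrad-Gabber-Prasad15} rather than sketching a d\'evissage.

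One caveat on your d\'evissage, however: your handling of the ``perfect'' case presumes that $H$ admits a simply connected cover $H^{\mathrm{sc}}$, i.e.\ that $H$ is semisimple. But a perfect smooth connected affine group need not even be reductive---for instance $\SL_2\ltimes\mathbb{G}_a^2$ with the standard representation is perfect with nontrivial unipotent radical---so the short exact sequence $1\to\mu\to H^{\mathrm{sc}}\to H\to 1$ is simply unavailable in general. The references the paper invokes cover this via the structure theory of pseudo-reductive groups, which your outline does not address. Your sketch is therefore complete for the connected reductive case (the only one actually used later in the paper), but not for the perfect case as stated in the lemma.
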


\begin{proof}
    When $v\in S$, one has $\calX(F_v)=X(F_v)$ and $\St(\calX)_v=G(F_v)$, hence $\calX(F_v)/\St(\calX)_v$ corresponds bijectively to $\Ker[\rH^1(F_v,H)\to\rH^1(F_v,G)]$. Therefore \[\#[\calX(F_v)/\St(\calX)_v] \leqslant \#\rH^1(F_v,H),\] and the latter is finite by \cite[Prop.7.1.3]{Conrad12} and \cite[Prop.A.2.11]{Conrad-Gabber-Prasad15}.

    For any $x\in {\calX}(\calO_{v})$ with $v\notin S$, the morphism induced by the point $x$
\[f_x\colon   G\cong \{x \} \times_F G \longrightarrow  X \times_F G\xrightarrow{m} X\]  is dominant and smooth.
By \cite[Proposition~3.5.73(ii)]{Poonen18}, we have that $f_x$ is an open map over $F_v$ points. This implies that $x\St({\calX})_v$ is open in ${\calX}(\calO_{v})$. By compactness of ${\calX}(\calO_{v})$, one concludes that $\#[{\calX}(\calO_{v})/ \St({\calX})_v]$ is finite.
\end{proof}

Define \[\St({\calX})\coloneqq \prod_{v\in \Omega_F} \St({\calX})_v.\]
Then $\St({\calX})$ is an open subgroup of $G({\bf A}_F)$. We set
\[
{\calX} \cdot {\sigma_{\bf A}} \coloneqq \prod_{v\in S}\big({\calX}(F_{v}) \cdot  \sigma_{v}\big)\times \prod_{v\in \Omega_F\setminus S}\big({\calX}(\calO_{v}) \cdot  \sigma_{v}\big)\subseteq
 X ({\bf A}_F)
\] for any $\sigma_{\bf A}=(\sigma_v)_{v\in \Omega_F}\in G({\bf A}_F)$. In particular,
\[
{\calX} \cdot {1_{\bf A}} =\prod_{v\in S}{\calX}(F_{v})\times\prod_{v\in \Omega_F\backslash S} {\calX}(\calO_{v}).
\]
It is clear that $\sigma_{\bf A}^{-1}\St({\calX})\sigma_{\bf A}$ acts on $ {\calX} \cdot \sigma_{\bf A}$ (on the right).

 \begin{coro} \label{adelorb} The number of orbits \[
 \#[{\calX} \cdot \sigma_{\bf A}/\sigma_{\bf A}^{-1}\St({\calX})\sigma_{\bf A}]
 \] is finite.

\end{coro}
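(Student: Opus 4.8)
The plan is to deduce Corollary~\ref{adelorb} from Lemma~\ref{localorb} by an ad\`elic patching argument. The orbit set $[{\calX}\cdot\sigma_{\bf A}/\sigma_{\bf A}^{-1}\St({\calX})\sigma_{\bf A}]$ is in bijection, via right translation by $\sigma_{\bf A}$, with $[{\calX}\cdot 1_{\bf A}/\St({\calX})]$ (note that ${\calX}\cdot\sigma_{\bf A}=({\calX}\cdot 1_{\bf A})\cdot\sigma_{\bf A}$ and $\St({\calX})$ and its conjugate are carried to each other), so it suffices to bound $\#[{\calX}\cdot 1_{\bf A}/\St({\calX})]$. Since ${\calX}\cdot 1_{\bf A}=\prod_{v\in S}{\calX}(F_v)\times\prod_{v\notin S}{\calX}(\calO_v)$ and $\St({\calX})=\prod_v\St({\calX})_v$ both decompose as restricted products, one is tempted to write $\#[{\calX}\cdot 1_{\bf A}/\St({\calX})]=\prod_v\#[{\calX}\cdot 1_v/\St({\calX})_v]$; by Lemma~\ref{localorb} each factor is finite, and for $v\notin S_0$ the factor is $\#[{\calG}(\calO_v)\backslash\!\!\backslash\text{-orbit on }{\calX}(\calO_v)]$, which equals $1$ because ${\calX}_{\calO_{S_0}}\cong{\calH}\backslash{\calG}$ together with $P\in{\calX}(\calO_v)$ forces ${\calX}(\calO_v)={\calH}(\calO_v)\backslash{\calG}(\calO_v)$ to be a single ${\calG}(\calO_v)$-orbit (here smoothness of ${\calG}\to{\calX}$ over $\calO_v$ makes the orbit map on $\calO_v$-points surjective). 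Hence only the finitely many places in $S_0$ contribute a factor $>1$, and the product is a finite product of finite numbers, hence finite.

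The one point that needs care — and which I expect to be the main obstacle — is the interchange of the orbit-quotient with the restricted product. A priori $[\prod_v A_v/\prod_v B_v]$ need not equal $\prod_v[A_v/B_v]$ for restricted products, since an ad\`elic orbit is built from a \emph{constrained} family of local orbits: for $v\notin S_0$ we need the local component to lie in the distinguished ${\calG}(\calO_v)$-orbit through $P$ for all but finitely many $v$ \emph{simultaneously}. The clean way around this is to argue directly: given $x=(x_v)\in{\calX}\cdot 1_{\bf A}$, for all $v\notin S_0$ we have $x_v\in{\calX}(\calO_v)$ and, as above, ${\calX}(\calO_v)$ is a single ${\calG}(\calO_v)$-orbit, so $x_v$ and $P$ lie in the same $\St({\calX})_v$-orbit for every $v\notin S_0$. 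Therefore the $\St({\calX})$-orbit of $x$ is determined by the finite tuple of local orbits $(x_v\bmod\St({\calX})_v)_{v\in S_0}$, and the map sending an $\St({\calX})$-orbit to this tuple is injective into $\prod_{v\in S}[{\calX}(F_v)/\St({\calX})_v]\times\prod_{v\in S_0\setminus S}[{\calX}(\calO_v)/\St({\calX})_v]$. By Lemma~\ref{localorb} this target is a finite set, so $\#[{\calX}\cdot 1_{\bf A}/\St({\calX})]$ is finite, and the corollary follows.

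I should also record the small verification that Lemma~\ref{localorb} applies: by hypothesis $H$ is connected and reductive (the paper assumes $H$ reductive throughout, since $X$ is affine), so the condition ``$H$ reductive or perfect'' in Lemma~\ref{localorb} is met. With that in place the argument above is complete. The only genuinely new ingredient beyond Lemma~\ref{localorb} is the observation that ${\calX}(\calO_v)$ is a single ${\calG}(\calO_v)$-orbit for $v\notin S_0$, which is immediate from the identification $\calX_{\calO_{S_0}}\cong{\calH}\backslash{\calG}$ and the presence of the integral point $P$; no deeper input (e.g. no cohomological vanishing) is required, because we are only claiming $\calX(\calO_v)$ is contained in \emph{a} single orbit, not computing $\rH^1$.
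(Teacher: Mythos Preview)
Your overall strategy coincides with the paper's: reduce to local orbit counts, show that almost all of them equal $1$, and invoke Lemma~\ref{localorb} at the finitely many remaining places. The reduction from general $\sigma_{\bf A}$ to $1_{\bf A}$ by right translation is fine and slightly streamlines the paper's version, which instead keeps $\sigma_{\bf A}$ and observes that $\sigma_v\in\calG(\calO_v)$ for almost all $v$.

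The gap is in your justification that $\calX(\calO_v)$ is a single $\calG(\calO_v)$-orbit for $v\notin S_0$. You assert that ``smoothness of $\calG\to\calX$ over $\calO_v$ makes the orbit map on $\calO_v$-points surjective'' and that ``no cohomological vanishing is required''. This is not correct: a smooth surjective morphism need not be surjective on $\calO_v$-points (e.g.\ the \'etale cover $\Spec\calO_v[x]/(x^2-a)\to\Spec\calO_v$ for $a\in\calO_v^\times$ a non-square). What you actually need is that for each $z\in\calX(\calO_v)$ the fibre $\calG_z$, an $\calH$-torsor over $\calO_v$, has an $\calO_v$-point. By Hensel's lemma this reduces to the special fibre, and there one must invoke Lang's theorem (every torsor under a smooth connected group over a finite field is trivial). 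Equivalently, one needs $\rHet^1(\calO_v,\calH)=1$, which is exactly the cohomological input the paper uses. Your equality $\calX(\calO_v)=\calH(\calO_v)\backslash\calG(\calO_v)$ is the conclusion to be proved, not a formality: in general $(\calH\backslash\calG)(\calO_v)$ is strictly larger than $\calH(\calO_v)\backslash\calG(\calO_v)$.

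A secondary point: even the smoothness of $\calG\to\calX$ (equivalently, of $\calH$) over $\calO_v$ is not guaranteed for all $v\notin S_0$, since $\calH$ is only assumed to be a finite-type group scheme with generic fibre $H$. The paper handles this by spreading out and enlarging $S_0$ to a finite $S_1$ outside which $\calH$ is smooth; you should do the same before invoking Hensel and Lang.
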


\begin{proof} 
Write $\sigma_{\bf A}=(\sigma_v)_{v\in \Omega_F}$. Then one has the componentwise bijection
\[
{\calX} \cdot \sigma_{\bf A}/ \sigma_{\bf A}^{-1}  \St({\calX}) \sigma_{\bf A}
 \cong [\prod_{v\in S} {\calX}(F_{v}) \cdot \sigma_v / \sigma_v^{-1} \St({\calX})_v \sigma_v ]\times [\prod_{v\not\in S} {\calX}(\calO_{v}) \cdot \sigma_v /\sigma_v^{-1} \St({\calX})_v \sigma_v ].  
\]
There is a finite subset $S_1\supseteq S_0$ such that for all $v\not \in S_1$ one has $\sigma_v\in {\calG}(\calO_{v})$ and, in particular, ${\calX}(\calO_{v}) \cdot \sigma_v /\sigma_v^{-1} \St({\calX})_v \sigma_v={\calX}(\calO_{v})/ {\calG}(\calO_{v})$. Moreover, enlarging $S_1$ if necessary, one may assume that $\calH$ is smooth outside $S_1$ (see \cite[Theorem~3.2.1(ii)]{Poonen18}).

As a connsequence, for $v\not \in S_1$ one has $\rHet^1(\calO_{v}, {\calH})=1$ by Hensel's lemma \cite[Theorem~3.5.63(a)]{Poonen18} and Lang's theorem \cite[Theorem~5.12.19(a)]{Poonen18}, therefore the canonical exact sequence of pointed sets
\[
1\rightarrow {\calH}(\calO_{v}) \rightarrow {\calG} (\calO_{v}) \rightarrow {\calX}(\calO_{v}) \rightarrow \rHet^1(\calO_{v}, {\calH})
\]
gives $\#[{\calX}(\calO_{v}) \cdot \sigma_v /\sigma_v^{-1} \St({\calX})_v \sigma_v]=\#[{\calX}(\calO_{v})/ {\calG}(\calO_{v})]=1$.
Now the result follows from Lemma \ref{localorb}, as the set of orbits ${\calX}(F_{v}) \cdot \sigma_v / \sigma_v^{-1} \St({\calX})_v \sigma_v$ for $v\in S$, resp. ${\calX}(\calO_{v}) \cdot \sigma_v /\sigma_v^{-1} \St({\calX})_v \sigma_v$ for $v\in S_1\backslash S$, is easily seen to be in bijection with ${\calX}(F_{v})/\St({\calX})_v$, resp. ${\calX}(\calO_{v})/\St({\calX})_v$.
\end{proof}

\begin{defn} 
Put $\Gamma \coloneqq G(F) \cap \St({\calX})$, the intersection being taken inside $G({\bfA}_F)$. 
\end{defn}

 It is clear that $\Gamma$ acts on ${\calX}(\calO_S)$.

\begin{defn} \label{locquiv} 
For any $x,y \in {\calX}(\calO_S)$, we define the equivalence relation:
\[
x\sim y \ \ \ \Leftrightarrow \ \ \ x=y \cdot s_{\bf A} \text{ for some }s_{\bf A}\in \St({\calX}).
\]   The set of equivalence classes is denoted by ${\calX}(\calO_S)/\sim $.

\end{defn}


\begin{prop} \label{equiv}
    If $G$ is semi-simple and simply connected such that $G'(F_S)$ is not compact for any simple factor $G'$ of $G$, then the diagonal map
\begin{equation}\label{diag}
    {\calX}(\calO_S)/\sim  \ \ \longrightarrow  \ ({\calX} \cdot 1_{\bf A})^{\Br_{1,P}(X, G)}/\St({\calX})
\end{equation}
is bijective. In particular, the number of orbits $\#[\calX(\calO_S)/\sim]$ is finite.

\end{prop}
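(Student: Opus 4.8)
The plan is to dispose of well-definedness and injectivity by formal arguments and to deduce surjectivity from the strong approximation theorem with Brauer--Manin obstruction over function fields.

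First, the diagonal map $\calX(\calO_S)\hookrightarrow\calX\cdot 1_{\bf A}\subseteq X(\bfA_F)$ has image inside $(\calX\cdot 1_{\bf A})^{\Br_{1,P}(X,G)}$, because $\Br_{1,P}(X,G)$ is a subgroup of $\Br(X)$ and every $F$-rational point is orthogonal to $\Br(X)$ by the reciprocity law for $F$; moreover $\sim$-equivalent points have $\St(\calX)$-equivalent images by the very definition of $\sim$. So the displayed map is well defined. Injectivity is then immediate: if $x,y\in\calX(\calO_S)$ have the same image in $(\calX\cdot 1_{\bf A})^{\Br_{1,P}(X,G)}/\St(\calX)$, then $x=y\cdot s_{\bf A}$ in $X(\bfA_F)$ for some $s_{\bf A}\in\St(\calX)$, which is precisely the relation $x\sim y$ of Definition~\ref{locquiv}.

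For surjectivity, fix $(x_v)\in(\calX\cdot 1_{\bf A})^{\Br_{1,P}(X,G)}$. The first step is to check that its orbit $(x_v)\cdot\St(\calX)$ is an open subset of $X(\bfA_F)$ contained in the Brauer--Manin set $X(\bfA_F)^{\Br_{1,P}(X,G)}$. Openness is verified place by place exactly as in the proofs of Lemma~\ref{localorb} and Corollary~\ref{adelorb}: for $v\in S$ the $G(F_v)$-orbit of $x_v$ is open in $X(F_v)$ since the orbit morphism $G\to X$ is smooth and dominant, hence open on $F_v$-points; for $v\notin S$ the set $x_v\cdot\St(\calX)_v$ is open in $\calX(\calO_v)$, and it equals $\calX(\calO_v)$ once $v$ lies outside a suitable finite set. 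For the inclusion into the Brauer--Manin set, the key point is that, $G$ being simply connected, $\Pic(\overline G)=0$ and $\overline G$ has no nonconstant invertible functions, so $\Br_1(G_{F_v})$ equals the image of $\Br(F_v)$; pulling an element $\alpha\in\Br_1(X,G)$ back along an orbit morphism $g\mapsto x\cdot g$ ($x\in X(F_v)$) lands in $\Br_1(G_{F_v})$, whence $\alpha$ is constant on each $G(F_v)$-orbit in $X(F_v)$. Consequently right translation by $\St(\calX)\subseteq G(\bfA_F)$ preserves every local term $\alpha(x_v)$, hence the Brauer--Manin pairing; and $X(\bfA_F)^{\Br_{1,P}(X,G)}=X(\bfA_F)^{\Br_1(X,G)}$, since the two groups differ only by constant classes from $\Br(F)$, which pair trivially with every adelic point. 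Now the strong approximation theorem of Demarche--Harari \cite{Demarche-Harari22} applies to $X=H\backslash G$: as $G$ is semisimple simply connected with $G'(F_S)$ non-compact for every simple factor $G'$ and $H$ is connected, $X(F)$ is dense in $X(\bfA_F)^{\Br_1(X,G)}$. Since $(x_v)\cdot\St(\calX)$ is a non-empty open subset of $X(\bfA_F)^{\Br_1(X,G)}$, it contains a point $x\in X(F)$; then $x\in x_v\cdot\St(\calX)_v$ for every $v$, in particular $x\in\calX(\calO_v)$ for all $v\notin S$, so $x\in\calX(\calO_S)$ and $x\sim(x_v)$. This proves the bijection; and since the Brauer--Manin set is $\St(\calX)$-stable, $\calX(\calO_S)/\sim$ injects into $\calX\cdot1_{\bf A}/\St(\calX)$, which is finite by Corollary~\ref{adelorb} applied with $\sigma_{\bf A}=1_{\bf A}$.

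The delicate point I expect to be the main obstacle is the behaviour at the places of $S$. Strong approximation is only available away from $S$, so a rational point approximating $(x_v)$ outside $S$ need not lie in the prescribed $G(F_v)$-orbit for $v\in S$. Reconciling this requires invoking the Demarche--Harari theorem in a form that also records the local orbits at $S$ --- equivalently, applying it to the twisted homogeneous spaces ${}^{\xi}X={}^{\xi}H\backslash G$, where $\xi\in\rH^1(F,H)$ has trivial image in $\rH^1(F,G)$ and is chosen to absorb the local classes of $(x_v)$ at the places of $S$ --- together with the verification that connectedness of the stabilizer and non-compactness over $F_S$ survive the twisting and that the Brauer group governing the obstruction transforms compatibly. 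This bookkeeping, rather than the strong approximation input itself, is the technical heart of the argument.
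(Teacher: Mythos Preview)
Your argument is close to the paper's, and the well-definedness and injectivity steps are handled just as they do it; your direct verification that elements of $\Br_1(X,G)$ are constant on local $G$-orbits, via $\Br_1(G_{F_v})=\Br(F_v)$ for simply connected $G$, is a clean alternative to the paper's citation of Demarche--Harari's compatibility result (which only gives invariance up to a universal sign). The difficulty you flag at the end is real, however: the bald assertion that $X(F)$ is dense in $X(\bfA_F)^{\Br_1(X,G)}$ for the \emph{full} adelic topology is not what Demarche--Harari prove and is generally false, since it would force weak approximation at the places of $S$.

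The paper resolves this more simply than your twisting proposal. It invokes the \emph{orbit form} of the Demarche--Harari theorem (their Theorem~5.8(1)), which combined with strong approximation for $G$ says that any $(x_v)\in X(\bfA_F)^{\Br_{1,P}(X,G)}$ can be written as $N\cdot(g_v)$ for some $N\in X(F)$ and $(g_v)\in G(\bfA_F)$. Strong approximation for $G$ off $S$ then yields $\gamma\in G(F)$ with $\gamma\in g_v\cdot\St(\calX)_v$ for all $v\notin S$, so $M\coloneqq N\cdot\gamma\in\calX(\calO_S)$; and at each $v\in S$ one has $M=x_v\cdot(g_v^{-1}\gamma)\in x_v\cdot G(F_v)=x_v\cdot\St(\calX)_v$ automatically. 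This is the Colliot-Th\'el\`ene--Xu argument the paper alludes to. Your twisting idea is not wrong---indeed the existence of the global class $\xi$ with the prescribed localizations at $S$ is equivalent to $(x_v)\in X(F)\cdot G(\bfA_F)$, which is precisely what Theorem~5.8(1) delivers---but routing through the orbit statement avoids the cohomological bookkeeping you anticipate entirely.
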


\begin{proof}
By \cite[Proposition~3.3, Definition~3.6, Theorem~3.7]{Demarche-Harari22}, the action of a $g_{\bfA}\in G({\bfA}_F)$ on any $x_{\bfA}\in X({\bfA}_F)$ does not affect (up to a universal sign) the Brauer-Manin pairing with a fixed Brauer element in $\Br_{1,P}(X,G)$, namely \[
((x_v\cdot g_v)_v,\alpha)=\pm((x_v)_v,\alpha) \text{ for any }\alpha\in\Br_{1,P}(X,G).
\] In particular, one sees that the subset $({\calX} \cdot 1_{\bf A})^{\Br_{1,P}(X, G)}$ is stabilized under the action of $\St(\calX)$ on $\calX\cdot 1_{\bfA}$. By definition it is clear that the map \eqref{diag} is injective.

To see that this map is also surjective, let $O$ be an $\St({\calX})$-orbit in $({\calX} \cdot 1_{\bf A})^{\Br_{1,P}(X, G)}$ and fix $(x_v) \in O$. Since $O$ is open in ${\calX} \cdot 1_{\bf A}$ by the proofs of Lemma~\ref{localorb} and Corollary~\ref{adelorb}, it suffices to find an integral point very close to $(x_v)$. By \cite[Theorem~5.8 (1)]{Demarche-Harari22} and the strong approximation property of $G$ outside $S$ (see \cite[Theorem~A]{Prasad77}), there exist $(g_v)\in G({\bfA}_F)$ and $N\in X(F)$ such that $N\cdot g_v=x_v$ for any $v\in\Omega_F$. Now a similar argument as in the proof of \cite[Theorem~3.7 (b)]{CT-Xu09} shows that there is an integral point $M\in {\calX}(\calO_S)\cap O$, and the surjectivity of \eqref{diag} follows.

In particular, one has \[
\#[\calX(\calO_S)/\sim]=\#[({\calX} \cdot 1_{\bf A})^{\Br_{1,P}(X, G)}/\St({\calX})]\leqslant \#[{\calX} \cdot 1_{\bf A}/\St({\calX})]<\infty
\] by Corollary~\ref{adelorb}.
\end{proof}

Write the equivalence class decomposition with respect to $\sim$ as \begin{equation}\label{partition}
{\calX}(\calO_S) \  =  \ \coprod_{i}  ({\calX}(\calO_S)\cap x_i \St({\calX}))  
\end{equation}  with $x_i\in {\calX}(\calO_S)$.  Within each class ${\calX}(\calO_S)\cap x_i \St({\calX})$ we can further define an equivalence relation $\sim_G$ by
\[y\sim_G z \ \ \Leftrightarrow \ \ y=z\cdot g \text{\ \ for some\ \ }g\in G(F).\]

\begin{prop} \label{sha}
One has the following inequality
\[
\# [({\calX}(\calO_S)\cap x_i \St({\calX}))/\sim_G] \leqslant  \#\Sha^1(F,H)<\infty.
\]

If moreover $G$ is semi-simple simply connected and $G'(F_S)$ is not compact for any simple factor $G'$ of $G$, then equality holds.
\end{prop}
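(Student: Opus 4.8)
The plan is to relate the set $(\calX(\calO_S)\cap x_i\St(\calX))/\sim_G$ to a kernel of a cohomology map and then to $\Sha^1(F,H)$. Fix a representative $x_i\in\calX(\calO_S)$ and, choosing an auxiliary $F$-point, view $\calX(\calO_S)\cap x_i\St(\calX)$ as a subset of $X(F)=H\backslash G(F)$. Two rational points $y,z$ in this set are $\sim_G$-equivalent precisely when they lie in the same $G(F)$-orbit; since $G(F)$ acts transitively on $X(F)$ only up to the obstruction measured by $\rH^1(F,H)$, the quotient $X(F)/G(F)$ (rational points modulo the full rational action) injects into $\rH^1(F,H)$ via the usual twisting/orbit map $x\mapsto [\tau_x]$ where $\tau_x\in\rH^1(F,H)$ is the class of the stabilizer torsor. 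The first step is therefore to check that $y\mapsto[\tau_y]$ is constant on $\sim_G$-classes and separates distinct classes inside one $\St(\calX)$-packet, so that $\#[(\calX(\calO_S)\cap x_i\St(\calX))/\sim_G]$ equals the number of classes in the image of this packet in $\rH^1(F,H)$.

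Next I would show this image lands in $\Sha^1(F,H)=\Ker[\rH^1(F,H)\to\prod_v\rH^1(F_v,H)]$. Here the point is exactly that all $y$ in $\calX(\calO_S)\cap x_i\St(\calX)$ are, place by place, in the same $\St(\calX)_v$-orbit as $x_i$: for $v\in S$ we have $\St(\calX)_v=G(F_v)$ so $y$ and $x_i$ are automatically $G(F_v)$-equivalent; for $v\notin S$ the condition $y\in x_i\St(\calX)$ forces $y_v\in x_{i,v}\St(\calX)_v\subseteq x_{i,v}G(F_v)$. Hence $\tau_y$ and $\tau_{x_i}$ have the same image in every $\rH^1(F_v,H)$, so $\tau_y\cdot\tau_{x_i}^{-1}$ (or the appropriate difference, $H$ being possibly nonabelian — use the twisted groups $H_{x_i}$ and the pointed-set exact sequences) lies in $\Sha^1(F,H_{x_i})$, which is finite by the known finiteness of $\Sha^1$ for connected reductive groups over function fields. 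This yields the inequality $\#[(\calX(\calO_S)\cap x_i\St(\calX))/\sim_G]\le\#\Sha^1(F,H)<\infty$; the finiteness of $\Sha^1(F,H)$ for $H$ connected reductive is standard (it follows from finiteness of the Tate–Shafarevich group of the abelianization together with vanishing/finiteness for the semisimple simply connected and adjoint parts).

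For the reverse inequality under the hypothesis that $G$ is semisimple simply connected with $G'(F_S)$ noncompact for every simple factor $G'$, I would argue that every class in $\Sha^1(F,H)$ is actually realized. Given $\tau\in\Sha^1(F,H)$, twist $X$ by $\tau$ to get a homogeneous space $X_\tau$ under $G$ with connected reductive stabilizer, still equipped with an integral model and with $X_\tau(F_v)\neq\emptyset$ for all $v$ (because $\tau$ is locally trivial); by $\rH^1(F_v,G)=1$ (Kneser/Bruhat–Tits, $G$ simply connected) the local points of $X_\tau$ sit in the correct $\St(\calX)_v$-orbit, so one produces an adelic point of the correct integral model lying in the same $\St(\calX)$-packet. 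Then invoke Proposition~\ref{equiv} (strong approximation with Brauer–Manin obstruction, applicable since the Brauer pairing is insensitive to the $G(\bfA_F)$-action) to descend this adelic point to an integral point $y\in\calX(\calO_S)\cap x_i\St(\calX)$ whose orbit class is $\tau$; distinct $\tau$ give non-$\sim_G$-equivalent points by the injectivity established above. Combining the two inequalities gives equality. The main obstacle I anticipate is bookkeeping the nonabelian cohomology carefully — tracking base points, the twisted stabilizers $H_{x_i}$, and the compatibility of the orbit map with both the $\sim_G$-relation and the local $\St(\calX)_v$-orbits — together with making sure the twisted homogeneous spaces admit integral models adapted to $\calX$ so that Proposition~\ref{equiv} applies verbatim.
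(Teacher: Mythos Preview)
Your approach to the inequality is correct and is essentially the standard one that the paper (deferring to Wei--Xu, Proposition~2.11) has in mind: map the packet $\calX(\calO_S)\cap x_i\St(\calX)$ into $\rH^1(F,H_{x_i})$ via the orbit map based at $x_i$, observe that the image lands in $\Sha^1(F,H_{x_i})$ because all elements of the packet are place-by-place $G(F_v)$-equivalent to $x_i$, and conclude using $\#\Sha^1(F,H_{x_i})=\#\Sha^1(F,H)$ for inner forms together with finiteness of $\Sha^1$ over global fields.

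For the equality, however, your route via twisting $X$ and invoking Proposition~\ref{equiv} has a genuine gap. Proposition~\ref{equiv} takes an adelic point in a given $\St(\calX)$-orbit and returns an integral point in that same $\St(\calX)$-orbit, but it gives \emph{no control} over which $G(F)$-orbit---i.e., which class in $\rH^1(F,H_{x_i})$---the resulting integral point represents. So from an adelic point cooked up out of $\tau$ you cannot conclude that $\tau$ itself is realized; you might just recover $x_i$ again. (Relatedly, if you really pass to a twisted variety $X_\tau$ with its own integral model $\calX_\tau$, you then owe an identification of $\calX_\tau(\calO_S)$ with the $\tau$-part of $\calX(\calO_S)\cap x_i\St(\calX)$, which is exactly the statement you are trying to prove.)

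The argument that actually makes the simply connected hypothesis work is more direct and does not go through Proposition~\ref{equiv} at all. Given $\tau\in\Sha^1(F,H_{x_i})$, its image in $\rH^1(F,G)$ lies in $\Sha^1(F,G)$, which vanishes for simply connected $G$ over global function fields (Harder); hence $\tau$ lifts to a genuine rational point $y\in X(F)$. Local triviality of $\tau$ gives $y=x_i\cdot(g_v)$ for some $(g_v)\in G(\bfA_F)$. Now apply strong approximation to $G$ itself (off $S$, using that $\St(\calX)\supseteq G(F_S)$): since $G(\bfA_F)=G(F)\cdot\St(\calX)$, the open right-$\St(\calX)$-saturated set $(g_v)^{-1}H_{x_i}(\bfA_F)\St(\calX)$ meets $G(F)$, say in $\gamma$, and then $z\coloneqq y\gamma\in\calX(\calO_S)\cap x_i\St(\calX)$ still has class $\tau$. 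This is the step where you should replace your appeal to local triviality of $\rH^1(F_v,G)$ by the global Hasse principle $\Sha^1(F,G)=1$, and your appeal to Proposition~\ref{equiv} by plain strong approximation for $G$.
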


\begin{proof} 
The proof of \cite[Proposition~2.11]{Wei-Xu16} goes through just as fine in the function field case, thanks to the fact that $\Sha^1(F,H)$ is finite by \cite[Theorem~1.3.3(i)]{Conrad12}.
\end{proof}

One can further decompose \[
{\calX}(\calO_S)\cap x_i \St({\calX})  = \coprod_{j} (y_{ij} G(F) \cap x_i \St({\calX}))= \coprod_{j} (y_{ij} G(F) \cap y_{ij} \St({\calX}))
\]
with $y_{ij} \in {\calX}(\calO_S)$.

\begin{prop}\label{refine}  With the above notation, there is a bijection
\[
(y_{ij} G(F) \cap y_{ij} \St({\calX}))/\Gamma   \ \xlongrightarrow{\cong}  \  H_{ij}(F) \backslash (H_{ij}({\bf A}_F)\cap G(F)\St({\calX}))/ (H_{ij}({\bf A}_F)\cap \St({\calX}))
\] where $H_{ij}$ is the stabilizer of $y_{ij}$ in $G$. Also, the sets on both sides are finite.


\end{prop}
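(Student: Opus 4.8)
The plan is to produce the bijection by unwinding the standard orbit–stabilizer dictionary for homogeneous spaces, first at the level of rational points and then adelically, and to check finiteness via the finiteness results already available (Corollary~\ref{adelorb}, Proposition~\ref{equiv}, Proposition~\ref{sha}). First I would fix $y=y_{ij}$ with stabilizer $H_{ij}$, so that the orbit map $g\mapsto y\cdot g$ identifies $H_{ij}(F)\backslash G(F)$ with a subset of $X(F)$ containing $y\cdot G(F)$; more precisely $y\cdot G(F)$ is exactly the image of $G(F)$, and the fiber of $y\cdot g$ is the coset $H_{ij}(F)g$. Intersecting with $y\St(\calX)$ means keeping only those $g\in G(F)$ with $y\cdot g\in y\St(\calX)$, i.e. $g\in H_{ij}(\bfA_F)\St(\calX)\cap G(F)$ after passing through the adelic orbit map $G(\bfA_F)\to X(\bfA_F)$ whose fiber over $y$ is $H_{ij}(\bfA_F)$ (here I use that $y\in\calX(\calO_S)\subset X(\bfA_F)$ and that $\St(\calX)$ is the adelic stabilizer of $\calX\cdot1_{\bfA}$). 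Quotienting on the right by $\Gamma=G(F)\cap\St(\calX)$ then corresponds, on the $G(F)$-side, to the double coset space $H_{ij}(F)\backslash\big(G(F)\cap H_{ij}(\bfA_F)\St(\calX)\big)/\Gamma$, and a final manipulation converts the right quotient by $\Gamma=G(F)\cap\St(\calX)$ into the right quotient by $H_{ij}(\bfA_F)\cap\St(\calX)$ on the adelic side, giving the asserted form $H_{ij}(F)\backslash\big(H_{ij}(\bfA_F)\cap G(F)\St(\calX)\big)/\big(H_{ij}(\bfA_F)\cap\St(\calX)\big)$.

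The key steps, in order, are: (i) set up the two orbit maps $p_F\colon G(F)\to X(F)$ and $p_{\bfA}\colon G(\bfA_F)\to X(\bfA_F)$ through the base point $y$, with fibers $H_{ij}(F)g$ and $H_{ij}(\bfA_F)g$ respectively; (ii) translate ``$y\cdot g\in y\St(\calX)$'' into a membership condition on $g$, namely $g\in p_{\bfA}^{-1}(y\St(\calX))\cap G(F)=\big(H_{ij}(\bfA_F)\St(\calX)\big)\cap G(F)$, using that $p_{\bfA}^{-1}(y\St(\calX))$ is the union of $H_{ij}(\bfA_F)$-cosets meeting $\St(\calX)$; (iii) pass to the quotient by $\Gamma$ on the right and by $H_{ij}(F)$ on the left, checking that these actions are compatible with the identifications in (i)–(ii); (iv) rewrite the resulting double-coset set: since $G(F)\cap H_{ij}(\bfA_F)\St(\calX)$ maps onto $H_{ij}(\bfA_F)\cap G(F)\St(\calX)$ under $g\mapsto$ its $H_{ij}(\bfA_F)$-component (or, more cleanly, observe that both sides are computed by the same double-coset combinatorics because $G(F)\St(\calX)\cap H_{ij}(\bfA_F)$ and $G(F)\cap H_{ij}(\bfA_F)\St(\calX)$ give canonically isomorphic $\big(H_{ij}(F),\,H_{ij}(\bfA_F)\cap\St(\calX)\big)$-double coset spaces), obtaining the displayed bijection.

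For finiteness of both sides, I would argue as follows. The left-hand side $\big(y_{ij}G(F)\cap y_{ij}\St(\calX)\big)/\Gamma$ is a subquotient of $\calX(\calO_S)/\Gamma$ — indeed $y_{ij}G(F)\cap y_{ij}\St(\calX)\subseteq \calX(\calO_S)\cap x_i\St(\calX)$ by the decomposition preceding the statement — and $\calX(\calO_S)/\Gamma$ is finite because it surjects onto $\#[\calX(\calO_S)/\sim]\cdot(\text{finitely many }\Gamma\text{-orbits in each }\sim\text{-class})$; concretely, $\#[\calX(\calO_S)/\sim]<\infty$ by Proposition~\ref{equiv}, and within a fixed $\sim$-class $\calX(\calO_S)\cap x_i\St(\calX)$ the number of $\sim_G$-classes is finite by Proposition~\ref{sha}, while each $\sim_G$-class modulo $\Gamma$ is itself finite by the same kind of $\rH^1$ and double-coset finiteness (Lemma~\ref{localorb} and Corollary~\ref{adelorb}) — alternatively, finiteness on the right-hand side is immediate once one knows $H_{ij}(\bfA_F)\cap G(F)\St(\calX)$ is a union of finitely many $\big(H_{ij}(F),H_{ij}(\bfA_F)\cap\St(\calX)\big)$-double cosets, which in turn follows from Corollary~\ref{adelorb} applied to the homogeneous space $X_{ij}=H_{ij}\backslash G$ with its induced integral model (the point $y_{ij}$ giving the base point), so the bijection transports finiteness from one side to the other.

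I expect step (ii)/(iv) — the precise passage between ``$G(F)\cap H_{ij}(\bfA_F)\St(\calX)$ modulo $\Gamma$'' and ``$H_{ij}(\bfA_F)\cap G(F)\St(\calX)$ modulo $H_{ij}(\bfA_F)\cap\St(\calX)$'' — to be the main obstacle, because it requires carefully tracking which subgroup acts on which side and verifying that no collapsing occurs under the orbit maps (i.e. that the fibers of $p_F$ and $p_{\bfA}$ interact with $\St(\calX)$ and $\Gamma$ in the expected way); this is exactly the point where one must use that $H_{ij}$ is the \emph{scheme-theoretic} stabilizer of $y_{ij}$ and that $\St(\calX)$ was defined as the honest adelic stabilizer of $\calX\cdot 1_{\bfA}$. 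Once that bookkeeping is done, the remaining verifications are formal, and the whole argument is essentially the function-field analogue of \cite[Proposition~2.12 and its proof]{Wei-Xu16}, which I would cite for the parts that carry over verbatim.
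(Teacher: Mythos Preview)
Your construction of the bijection is correct and is essentially the argument of \cite[Proposition~2.12]{Wei-Xu16}, which is exactly what the paper invokes; the orbit--stabilizer bookkeeping in your steps (i)--(iv) is the content of that reference, so that part is fine.

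The finiteness argument, however, has a genuine gap. Your primary route is to bound the left-hand side by a subquotient of $\calX(\calO_S)/\Gamma$ and then argue the latter is finite via Proposition~\ref{equiv} and Proposition~\ref{sha}. But those two propositions only give finitely many $\sim$-classes and finitely many $\sim_G$-classes inside each; what remains is precisely the finiteness of $(y_{ij}G(F)\cap y_{ij}\St(\calX))/\Gamma$, i.e.\ the very statement you are proving. (Indeed the paper's Remark immediately after Proposition~\ref{refine} records $\#[\calX(\calO_S)/\Gamma]<\infty$ as a \emph{consequence} of this proposition, not an input.) Your fallback, invoking Corollary~\ref{adelorb} for $X_{ij}=H_{ij}\backslash G$, does not help either: that corollary controls the number of $\St(\calX)$-orbits in the adelic space $\calX\cdot\sigma_{\bfA}$, not the number of $(H_{ij}(F),\,H_{ij}(\bfA_F)\cap\St(\calX))$-double cosets in $H_{ij}(\bfA_F)$, which is a class-number-type quantity for $H_{ij}$ and is not visible from orbit counts on $X$.

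The missing ingredient is the finiteness of class numbers for reductive groups over global function fields. The paper observes that $H_{ij}(\bfA_F)\cap\St(\calX)$ contains $H_{ij}(F_S)\cdot K$ with $K=H_{ij}(\bfA_F^S)\cap\prod_{v\notin S}\St(\calX)_v$ a compact open subgroup of $H_{ij}(\bfA_F^S)$; hence the right-hand side injects into $H_{ij}(F)\backslash H_{ij}(\bfA_F)/(H_{ij}(F_S)\cdot K)$, which is finite by \cite[Theorem~1.3.1]{Conrad12}. Without this input (or an equivalent one), your argument does not close. Note also that Proposition~\ref{equiv} carries the extra hypotheses that $G$ be semi-simple simply connected with $G'(F_S)$ non-compact, so even setting circularity aside it is not available at the generality in which Proposition~\ref{refine} is stated.
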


\begin{proof} 
The proof of \cite[Proposition 2.12]{Wei-Xu16} is still valid over global function fields. For the finiteness assertion, note that \[
(H_{ij}({\bf A}_F)\cap \St({\calX}))\supseteq H_{ij}(F_S)\cdot (H_{ij}({\bfA}_{F}^S)\cap\prod_{v\notin S}\St(\calX)_v)
\] and $H_{ij}({\bfA}_{F}^S)\cap\prod_{v\notin S}\St(\calX)_v$ is a compact open subgroup of $H_{ij}({\bfA}_{F}^S)$ by definition of $\St(\calX)$. It therefore follows from \cite[Theorem~1.3.1]{Conrad12} that $\#[(y_{ij} G(F) \cap y_{ij} \St({\calX}))/\Gamma]<\infty$.
\end{proof}

\begin{remark}
    Combining Proposition~\ref{equiv}, Proposition~\ref{sha} and Proposition~\ref{refine}, one obtains $\#[\calX(\calO_S)/\Gamma]<\infty$.
\end{remark}

\subsection{The asymptotic formula of $N({\calX}, q^n)$}\label{subsec-counting}
\begin{lemma}\label{3.1}
For $X=H\backslash G$ with fixed rational point $P\in X(F)$, suppose $G$ is semi-simple and simply connected and $H$ is connected reductive. Then $\Br_{1,P}(X, G)$ is finite and \[
\Br_{1,P}(X, G)\cong\Pic(H).
\] If moreover $H$ is semi-simple simply connected, then\[
    \Pic(H)=\Br_{1,P}(X, G)=0.
    \]
\end{lemma}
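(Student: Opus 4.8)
The plan is to identify $\Br_{1,P}(X,G)$ with a more computable group via the exact sequence coming from the fibration $p\colon G\to X$ with fibre $H$, and then to invoke a Colliot-Th\'el\`ene--Sansuc-type computation of $\Pic(H)$. First I would recall that, since $G$ is semi-simple simply connected, one has $\Pic(G)=0$ and $\Br(G)/\Br(F)\hookrightarrow \Br(\overline G)$ is controlled; more precisely the standard sequences (see Sansuc, or \cite[\S2]{Demarche-Harari22}) give $\overline{F}[X]^\times/\overline F^\times \cong \widehat H$, an exact sequence $0\to \Pic(\overline X)\to \Pic(\overline H)\to 0$ appropriately interpreted, and — using $\Pic(G)=0$ and the vanishing of $\widehat G$ (as $G$ is semi-simple) — an isomorphism $\Pic(X)\cong \Pic(H)$ after pulling back along $p$. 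The key point is that $\Br_{1,P}(X,G)$ is exactly the piece of $\Br(X)$ that dies in $\Br(\overline G)$ and vanishes at $P$; unwinding the commutative square in the excerpt, together with $\Br(F)\xrightarrow{\sim}\Br_1(X,G)/\Br_{1,P}(X,G)$, one gets that $\Br_{1,P}(X,G)$ is sandwiched between $\Pic(H)$-type invariants. I would then cite the precise statement — this is essentially \cite[Prop.~4.1]{Wei-Xu16} or the analogous statement in \cite{Demarche-Harari22} — that under these hypotheses $\Br_{1,P}(X,G)\cong \Pic(H)$, checking that every ingredient (vanishing of $\Pic(G)$, of $\widehat G$, Hochschild--Serre in \'etale cohomology) is insensitive to the function-field setting.

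For the finiteness of $\Pic(H)$ when $H$ is connected reductive over a function field $F$: I would use that $\Pic(H)$ injects into $\Pic(\overline H)^{\Gal(\overline F/F)}$ up to a group controlled by $\rH^1(F,\widehat H)$ and $\Br(F)$-torsion, and that $\Pic(\overline H)$ is a finite group for $H$ connected reductive over a separably closed field (it is the Picard group of a quotient of a semi-simple simply connected group by a central subgroup, hence finite — concretely $\Pic(\overline H)\cong \widehat{\mu}$ for the relevant finite central $\mu$, or one cites \cite[Lemma~6.9(iii)]{Sansuc81}). The Galois cohomology groups $\rH^1(F,\widehat H)$ with $\widehat H$ a finitely generated Galois module are also finite (Hermite--Minkowski analogue, or directly because the relevant extension of $F$ cutting out the action is finite and $\widehat H$ is finitely generated). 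Combining these bounds gives $\Br_{1,P}(X,G)\cong\Pic(H)$ finite.

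For the last assertion, suppose moreover $H$ is semi-simple simply connected. Then $\Pic(\overline H)=0$ (a semi-simple simply connected group over a separably closed field has trivial Picard group, \cite[Lemma~6.9(iv)]{Sansuc81} or the fact that such $H$ is an iterated extension/product making $\Pic$ vanish), and $\widehat H=0$, so all the correction terms vanish and $\Pic(H)=0$; by the isomorphism just established, $\Br_{1,P}(X,G)=0$ as well. I would phrase this as: $\Pic(H)$ injects into $\Pic(\overline H)=0$, hence is zero, and conclude.

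The main obstacle I expect is purely bookkeeping: making sure the Colliot-Th\'el\`ene--Sansuc exact sequences relating $\Pic$ and $\Br$ of $X$, $G$, $H$ — which are usually quoted over number fields or fields of characteristic zero — are valid over a global function field of characteristic $p$. The geometric inputs ($\Pic$, $\Br$ of the base-changed groups, the structure of $\widehat H$) are characteristic-free, and the arithmetic duality/finiteness inputs have function-field analogues in \cite{Conrad12}; so the task is to cite the right characteristic-free references (Sansuc's results hold over any field, and the étale Hochschild--Serre machinery is characteristic-free) rather than to prove anything genuinely new. A minor subtlety is that one must keep track of the base point $P$ to pass between $\Br_1(X,G)$ and $\Br_{1,P}(X,G)$, but since $\Br_{1,P}(X,G)\cong\Br_1(X,G)/\Br(F)$ is already recorded in the excerpt, this is immediate.
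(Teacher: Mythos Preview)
Your approach is essentially the paper's: both rest on Sansuc's exact sequence for the fibration $p\colon G\to X$ together with $\Pic(G)=0$ and $\Br_1(G)=\Br(F)$ (Sansuc, Lemma~6.9(iv)) to produce the isomorphism $\Pic(H)\xrightarrow{\sim}\Br_{1,P}(X,G)$. The paper is more direct here than your sketch: it simply quotes Sansuc's sequence (6.10.1), namely $0\to\Pic(H)\to\Br(X)\xrightarrow{p^*}\Br(G)$, observes that $p^*$ sends $\Br_1(X,G)$ into $\Br_1(G)=\Br(F)$, and reads off $\Pic(H)\cong\Ker[p^*\colon\Br_1(X,G)\to\Br(F)]=\Br_{1,P}(X,G)$. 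Your detour through $\Pic(X)$ and $\Pic(\overline X)\to\Pic(\overline H)$ is confused (those are not the groups in play) and unnecessary---drop it and just invoke the sequence above.

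The one genuine difference is finiteness. You argue via Galois descent: $\Pic(\overline H)$ is finite and $\rH^1(F,\widehat H)$ is finite (which is fine, since $\widehat H(\overline F)$ is torsion-free and the action factors through a finite quotient, so $\rH^1$ reduces to finite-group cohomology). The paper instead cites Rosengarten's theorem that $\Pic(H)$ is finite for any connected reductive $H$ over a global field, which is cleaner and avoids tracking the descent sequence. For the simply connected case both routes collapse to the same Sansuc citation, so there is no real difference there.
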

\begin{proof}
Since $G$ is semi-simple and simply connected, we have $\Pic(G)=0$ and $\Br_1(G)=\Br(F)$ by \cite[Lemma~6.9 (iv)]{Sansuc81}. The exact sequence (6.10.1) in [\textit{loc. cit.}, Proposition~6.10] gives \[
0\to \Pic(H)\to \Br(X)\xrightarrow{\pi^*} \Br(G) 
\] which in particular shows that $\Pic(H)$ embedds into $\Br_1(X,G)$. But $\pi^*$ maps $\Br_1(X,G)$ into $\Br_1(G)=\Br(F)$, hence the above sequence induces \[
\Pic(H)\xrightarrow{\sim}\Ker[\pi^*: \Br_{1}(X, G) \to  \Br(F)]=\Br_{1,P}(X,G)
\]Since $\Pic(H)$ is finite by \cite[Theorem~1.3 and Remark~1.4]{Rosengarten21b}, we get that
$\Br_{1,P}(X, G)$ is finite. 

If $H$ is semi-simple simply connected, we have $\Pic(H)=0$ again by \cite[Lemma~6.9 (iv)]{Sansuc81}, and hence $\Br_{1,P}(X,G)=0$.
\end{proof}

\begin{lemma}[Compare {\cite[Lemma 4.1]{Wei-Xu16}}]\label{3.2}
Suppose $X=H\backslash G$ where both $G$ and $H$ are connected reductive groups without non-trivial characters over $F$. For any finite subgroup $B$ of $\Br(X)/\Br(F)$, one has
\[
\frac{r_H}{r_G}q^{(1-\eta_F)\dim X}\sum_{\xi \in B} (\prod_{v\notin S} \int_{{\calX}(\calO_v)} \xi \d m^X_v \cdot \int_{X(F_S, q^n)} \xi \d m^X_{S}) =  \# B  \int_{(\prod_{v\notin S}{\calX} (\calO_{v}) \times X(F_S, q^n))^B} \d m^{X}
\]
where
\[
(\prod_{v\notin S}{\calX} (\calO_{v}) \times X(F_S, q^n))^B= (\prod_{v\notin S} {\calX}(\calO_{v}) \times X(F_S, q^n))\cap ({\calX} \cdot 1_{\bfA} )^B.
\]
\end{lemma}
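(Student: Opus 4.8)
The plan is to reduce the identity to an orthogonality relation for the finite abelian group $B$ acting through the Brauer--Manin pairing, exactly as in \cite[Lemma 4.1]{Wei-Xu16}, and to make sure every measure-theoretic manipulation is legitimate over a function field. First I would unwind the definition of the Tamagawa measure $m^X$ from \eqref{Tam-measure}: writing $W \coloneqq \prod_{v\notin S}{\calX}(\calO_v) \times X(F_S,q^n)$, the set $W$ has the form $W_S \times \prod_{v\notin S} W_v$ with $W_S = X(F_S,q^n)$ and $W_v = {\calX}(\calO_v)$, and $m^X = m^X_S \cdot r_X^{-1} q^{(1-\eta_F)\dim X} \prod_{v\notin S} m^X_v$ with $r_X = r_G/r_H$. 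Since each $\xi \in B \subseteq \Br(X)/\Br(F)$ is a locally constant function on $X(\bfA_F)$ of the form $\xi = \prod_v \xi_v$ with $\xi_v$ locally constant on $X(F_v)$ and $\xi_v \equiv 1$ for almost all $v$, the integral $\int_{W} \xi\, \d m^X$ factors as $r_X^{-1} q^{(1-\eta_F)\dim X}$ times $\big(\int_{W_S}\xi\,\d m^X_S\big)\prod_{v\notin S}\big(\int_{W_v}\xi\,\d m^X_v\big)$; convergence of the infinite product is guaranteed because $\xi_v \equiv 1$ and $m^X_v({\calX}(\calO_v))$ is the local density for almost all $v$ (the relevant Euler product converges by the usual estimates, as in \S 2). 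This identifies the left-hand side of the claimed equation, up to the factor $\#B$, with $\sum_{\xi\in B}\int_W \xi\,\d m^X$.

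Next I would invoke the standard orthogonality relation: for the finite abelian group $B$, pairing against $W$ via Brauer--Manin gives a homomorphism, and $\sum_{\xi\in B}\xi(x) = \#B \cdot \mathbf{1}_{W^B}(x)$ for $x \in W$, where $W^B = W \cap ({\calX}\cdot 1_{\bfA})^B$ because on $W \subseteq {\calX}\cdot 1_{\bfA}$ the condition ``$\xi(x) = 1$ for all $\xi \in B$'' cuts out precisely $W^B$. Here one must check that the pointwise sum $\sum_{\xi\in B}\xi$ is a genuine locally constant integrable function on $W$ so that one may exchange $\sum$ and $\int$ — this is immediate since $B$ is finite and each $\xi$ is locally constant and bounded by $1$ in absolute value on the compact-by-finite-height set $W$. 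Integrating the orthogonality identity against $m^X$ over $W$ then yields $\sum_{\xi\in B}\int_W \xi\,\d m^X = \#B \int_{W^B}\d m^X$, and combining with the factorization above gives the stated formula after multiplying through by $r_X^{-1}q^{(1-\eta_F)\dim X}\cdot(\text{with } r_X = r_G/r_H \text{ rearranged as } r_H/r_G)$.

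The one genuine subtlety — and the step I expect to be the main obstacle — is justifying the factorization of $\int_W \xi\,\d m^X$ into local factors together with the convergence of the resulting infinite product, since over a function field one does not have the number-field reflexes at hand and must instead appeal to the structure of the Tamagawa measure recalled in \S 2 (in particular the convergence of $\prod_{v\notin S}\lambda_v^X\, m^X_v({\calX}(\calO_v))$, equivalently the behaviour of the partial Artin $L$-function $L_S(\rho_X,s)$ near $s=1$, which is controlled by $r_X$). Once this convergence is in place, together with the fact that $W$ is measurable with finite $m^X$-measure (it is a product of compact sets for $v\notin S$ and the finite-height, hence relatively compact, set $X(F_S,q^n)$ for $v\in S$), everything else is the purely formal orthogonality argument of \cite[Lemma 4.1]{Wei-Xu16}, which transfers verbatim.
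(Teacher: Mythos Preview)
Your proposal is correct and follows essentially the same approach as the paper's proof: the paper simply invokes the orthogonality relation from \cite[Lemma 4.1]{Wei-Xu16} to obtain $\#B\int_{W^B}\d m^X=\sum_{\xi\in B}\int_W\xi\,\d m^X$, and then factorizes the right-hand side using the definition of $m^X$ in \eqref{Tam-measure}. You carry out the same two steps (in the opposite order) with more explicit attention to convergence and measurability; the convergence you flag as the main subtlety is precisely what the convergence factors $\lambda_v^X$ built into Definition~\ref{tam} are there to guarantee.
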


\begin{proof}
For the same reason as in the proof of \cite[Lemma 4.1]{Wei-Xu16}, one has \[
\# B  \int_{(\prod_{v\notin S}{\calX} (\calO_{v}) \times X(F_S, q^n))^B} \d m^{X}= \sum_{\xi\in B}\int_{\prod_{v\notin S}{\calX} (\calO_{v}) \times X(F_S, q^n)} \xi\d m^{X},
\]
the result then follows from the definition of $m^X$ in \eqref{Tam-measure}.
\end{proof}

We assume the following equi-distribution property for arbitrary $x\in \calX(\calO_S)$:

\begin{equation}\label{equi-dist}
\# \{y \in  x \cdot \Gamma: H_S(x)\leqslant q^n \} \sim  \frac{m^{H_x}_S( \Gamma_{H_x}\backslash H_x(F_S)) }{m^G_{S} (\Gamma \backslash G(F_S)) } m^X_{S} (x \cdot G(F_{S}) \cap X(F_S,  q^n))
\end{equation}
as $n \rightarrow \infty$, where $H_x$ is the stabilizer of $x$ in $G$, $\Gamma_{H_x}=H_x(F) \cap \Gamma$ and $m^{H_x}_S$ is the $S$-component of the Tamagawa measure over $H_x$ (i.e., $m^{H_x}_S\cdot m^X_{S}=m^G_{S}$). This has been established in the following situation:

(\cite[Corollary~1.2]{Benoist-Oh12})  $\mathrm{char}(F) \ne 2$, $G$ is an almost $F$-simple simply connected group, $G(F_S)$ is not compact, and $H$ satisfies the wave front property as named in \cite{Eskin-McMullen} (in particular, this is satisfied if $H$ is symmetric, namely, if $H$ is the group of fixed points of an involution $\sigma$ on $G$ defined over $F$, see the remark at the end of \cite[\S~1]{Benoist-Oh12}).

\

Theorem \ref{th1} follows from the following more general result.
\begin{thm}\label{th2}
Suppose $X=H\backslash G$ where $G$ is semi-simple and simply connected and $\calX$
is a separated scheme of finite type over $\calO_S$ whose generic fiber is
 $X$. If $G'(F_S)$ is not compact for any non-trivial simple factor $G'$ of $G$ and $H$ is connected reductive without non-trivial $F$-characters, then under the assumption \eqref{equi-dist} we have
\[
N({\calX}, q^n) \sim r_H\cdot q^{(1-\eta_F)\dim X}\sum_{\xi\in \Br_{1,P}(X,G)} (\prod_{v \notin S}  I_v(\calX{}, \xi)) \cdot I_{S} (X, q^n, \xi)
\]
as $n\rightarrow \infty$.
\end{thm}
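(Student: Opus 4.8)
The plan is to count $N(\calX,q^n) = \#\{x \in \calX(\calO_S): H_S(x) \le q^n\}$ by first organizing $\calX(\calO_S)$ into $\Gamma$-orbits, then invoking the equidistribution property \eqref{equi-dist} orbit-by-orbit, and finally reassembling the resulting Tamagawa volumes into the Brauer-twisted local density expression. Concretely, I would start from the decomposition of $\calX(\calO_S)$ furnished by Proposition~\ref{equiv}, Proposition~\ref{sha} and Proposition~\ref{refine}: there are only finitely many $\Gamma$-orbits, indexed say by representatives $z_k \in \calX(\calO_S)$, so that
\[
N(\calX,q^n) = \sum_k \#\{y \in z_k\cdot\Gamma : H_S(y) \le q^n\}.
\]
For each $k$, apply \eqref{equi-dist} (legitimate under hypothesis $(\ast)$, which holds since $G$ is almost $F$-simple simply connected, $G(F_S)$ is noncompact, and $X$ is symmetric) to get
\[
\#\{y \in z_k\cdot\Gamma : H_S(y)\le q^n\} \sim \frac{m^{H_{z_k}}_S(\Gamma_{H_{z_k}}\backslash H_{z_k}(F_S))}{m^G_S(\Gamma\backslash G(F_S))}\, m^X_S(z_k\cdot G(F_S)\cap X(F_S,q^n)).
\]

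The next step is to evaluate and sum these main terms. The denominator $m^G_S(\Gamma\backslash G(F_S))$ is a common factor; using that $G$ is simply connected so $\tau(G)=1$ and that strong approximation holds for $G$ outside $S$, one identifies $m^G_S(\Gamma\backslash G(F_S))$ with a quantity expressible through the Tamagawa measure on $G(\bfA_F)$ restricted to $\St(\calX)$ (this is the function-field analogue of the volume computations in \cite{Wei-Xu16}, and the factor $r_G=1$ appears because $G$ is semisimple). Similarly, the numerators $m^{H_{z_k}}_S(\Gamma_{H_{z_k}}\backslash H_{z_k}(F_S))$ should be rewritten, via Proposition~\ref{refine} and the relation $m^{H_x}_S\cdot m^X_S = m^G_S$, in terms of $m^{H}_S$ and the local volumes $m^H_v(H_{z_k}(\bfA_F)\cap\St(\calX)_v)$; here one needs that all the $H_{z_k}$ are inner forms of $H$ (being stabilizers of points in a single homogeneous space), so that $r_{H_{z_k}}=r_H$ and $\lambda_v^{H_{z_k}}=\lambda_v^H$ by Remark~\ref{rmk-inner form}. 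After this bookkeeping, the sum over $k$ of the geometric part $m^X_S(z_k\cdot G(F_S)\cap X(F_S,q^n))$ together with the orbit volumes collapses — because the $z_k\cdot G(F_S)$ partition the relevant adelic orbit — into an integral of the form
\[
q^{(1-\eta_F)\dim X}\cdot (\text{const}) \cdot \int_{(\prod_{v\notin S}\calX(\calO_v)\times X(F_S,q^n))^{\Br_{1,P}(X,G)}} \d m^X,
\]
where the superscript records the Brauer-Manin condition coming from Proposition~\ref{equiv} (only $\Br_{1,P}(X,G)$, not all of $\Br(X)$, enters, precisely as flagged in the remark after Theorem~\ref{th1}).

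Finally I would apply Lemma~\ref{3.2} with $B = \Br_{1,P}(X,G)$ (finite by Lemma~\ref{3.1}, and identified with $\Br_1(X,G)/\Br(F)$, a finite subgroup of $\Br(X)/\Br(F)$) to convert that single adelic integral over the Brauer-cut locus into $\frac{1}{\#B}\sum_{\xi\in B}(\prod_{v\notin S}I_v(\calX,\xi))\cdot I_S(X,q^n,\xi)$ times $\frac{r_G}{r_H}q^{(1-\eta_F)\dim X}$; tracking the constants and using $r_G=1$ yields exactly the asserted formula with the leading factor $r_H$. Theorem~\ref{th1} is then the special case where $X$ is symmetric so that $(\ast)$ — hence \eqref{equi-dist} — is unconditional.

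I expect the main obstacle to be the volume bookkeeping in the second step: one must carefully match the $S$-local Tamagawa measures of the stabilizers $H_{z_k}$ across all orbits with a single global measure on $H$, handling the several layers of decomposition (the $\St(\calX)$-classes of Proposition~\ref{equiv}, the $\Sha^1(F,H)$-refinement of Proposition~\ref{sha}, and the double-coset description of Proposition~\ref{refine}) so that the alternating contributions of the $\rH^1$ and $\Sha^1$ terms cancel correctly and reassemble into the clean Brauer-Manin integral. This is where the function-field subtleties (the role of $\eta_F$, the normalization constants $r_G,r_H,\lambda_v$, and the fact that $\tau(G)=1$ for simply connected $G$ over function fields) must all be used in concert, and where one has to be most careful that the argument of \cite{Wei-Xu16} transfers without loss.
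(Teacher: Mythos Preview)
Your proposal is correct and follows essentially the same route as the paper: decompose $\calX(\calO_S)$ via the three-layer refinement of Propositions~\ref{equiv}, \ref{sha}, \ref{refine}, apply \eqref{equi-dist} on each $\Gamma$-orbit, collapse the stabilizer volumes using strong approximation for $G$ and the inner-form invariance $r_{H_{z_k}}=r_H$, $\tau(H_{z_k})=\tau(H)$, and finish with Lemmas~\ref{3.1} and \ref{3.2}. The one constant you should make explicit in the bookkeeping step is the identity $\#\Sha^1(F,H)\cdot\tau(H)=\#\Pic(H)=\#\Br_{1,P}(X,G)$ (via Rosengarten and Lemma~\ref{3.1}), which is exactly what matches the orbit count against the $\#B$ appearing in Lemma~\ref{3.2}.
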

\begin{proof}
As in \S~\ref{subsec-orb}, write (finite) equivalence class decompositions \[
\calX(\calO_S)=\coprod_i(\calX(\calO_S)\cap x_i \St(\calX))
\] with $x_i \in \calX(\calO_S)$,
\[
\calX(\calO_S)\cap x_i\St(\calX)=\coprod_j(y_{ij}G(F)\cap x_i\St(\calX))=\coprod_j(y_{ij}G(F)\cap y_{ij}\St(\calX))
\] with $y_{ij}\in \calX(\calO_S)\cap x_i\St(\calX)$ for each $i$, and \[
y_{ij}G(F)\cap x_i\St(\calX)=\coprod_kz^{ij}_k\Gamma
\] with $z^{ij}_k \in y_{ij}G(F)\cap x_i\St(\calX)$ for each $i,j$. Then 
\[
N({\calX}, q^n) \sim \sum_i \sum_j \sum_k \frac{m^{ijk}_{S}( \Gamma_{ijk}\backslash H_{ijk}(F_S)) }{m^G_{S} (\Gamma \backslash G(F_S)) } m^X_{S} (z_k^{ij} \cdot G(F_{S}) \cap X(F_S, q^n ))
\] as $n \rightarrow \infty$ by the assumption \eqref{equi-dist}, where $H_{ijk}$ is the stabilizer of $z_k^{ij}$ in $G$, $\Gamma_{ijk}= H_{ijk}(F) \cap \Gamma$ and $m^{ijk}_{S}$ is the $S$-component of the Tamagawa measure on $H_{ijk}(\bfA_F)$.

By construction there exists some $g_k^{ij}\in G(F)$ such that $z_k^{ij}=y_{ij} \cdot g_k^{ij}$ for each $i,j,k$, then one has $$H_{ijk}=(g_k^{ij})^{-1}H_{ij}g_k^{ij}, $$where $H_{ij}$ is the stabilizer of $y_{ij}$ in $G$. This implies that
\[
m^{ijk}_{S}( \Gamma_{ijk}\backslash H_{ijk}(F_S))= m^{ij}_{S}( g_k^{ij}\Gamma (g_k^{ij})^{-1} \cap H_{ij}(F) \backslash H_{ij}(F_S)),
\] where $m^{ij}_{S}$ is the $S$-component of Tamagawa measure $m^{ij}$ on $H_{ij}(\bfA_F)$.

On the other hand, there also exists $s_k^{ij}\in \St({\calX})$ such that $z_{k}^{ij}=y_{ij} \cdot s_k^{ij}$, thus one obtains\[
\aligned & m^{ij}_{S}( g_k^{ij}\Gamma (g_k^{ij})^{-1} \cap H_{ij}(F) \backslash H_{ij}(F_S)) \\
= &
 q^{(\eta_F-1)\dim H_{ij}}\cdot r_{ij}\cdot m^{ij} (H_{ij}(F)\backslash H_{ij}(F) (H_{ij}(\bfA_F) \cap g_k^{ij} \St({\calX}) (g_k^{ij})^{-1})) \\ &\cdot \prod_{v\notin S} m^{ij}_v (H_{ij}(F_v) \cap g_k^{ij} \St({\calX})_v (g_k^{ij})^{-1})^{-1} \\
 = &
 q^{(\eta_F-1)\dim H}\cdot r_H\cdot m^{ij} (H_{ij}(F)\backslash H_{ij}(F) h_k^{ij} (H_{ij}(\bfA_F) \cap \St({\calX}))) \cdot \prod_{v \notin S} m^{ij}_v (H_{ij}(F_v) \cap  \St({\calX})_v)^{-1}
\endaligned
\] where $h_k^{ij}\coloneqq g_k^{ij}\cdot (s_k^{ij})^{-1}\in H_{ij}(\bfA_F)$ and $r_{ij}=r_H$ since $H_{ij}$ is an inner form of $H$ (see Remark~\ref{rmk-inner form}). Since $G(\bfA_F)=G(F)\St(\calX)$ by the strong approximation property of $G$, we have \[
\coprod_{k}H_{ij}(F) h_k^{ij} (H_{ij}(\bfA_F) \cap \St({\calX}))=H_{ij}(\bfA_F)
\] by Proposition \ref{refine}.  Therefore
\[
\begin{aligned}
    N({\calX}, q^n) \sim &q^{(\eta_F-1)\dim H} \sum_i \frac{r_H\cdot m^X_{S} (x_i \cdot G(F_{S}) \cap X(F_S,  q^n)) }{m^G_{S} (\Gamma \backslash G(F_S)) } \\ &\cdot ( \sum_j  \tau(H_{ij}) \prod_{v \notin S} m^{ij}_v (H_{ij}(F_v) \cap  \St({\calX})_v)^{-1} )
\end{aligned}
\]
as $n\rightarrow \infty$, where $\tau(H_{ij})$ is the Tamagawa number of $H_{ij}$ (note that $H_{ij}$ has no non-trivial $F$-characters as it is an inner form of $H$).

Since \[
m^{ij}_v (H_{ij}(F_v) \cap  \St({\calX})_v)^{-1}=\frac{m^X_v(y_{ij}\St({\calX})_v)}{m^G_{v}(\St({\calX})_v)}=\frac{m^X_v(x_{i}\St({\calX})_v)}{m^G_{v}(\St({\calX})_v)}
\]
and $\tau(H_{ij})=\tau(H)$ by \cite[Theorem~1.4]{Rosengarten21a}, we get that
\[
N({\calX}, q^n) \sim q^{(\eta_F-1)\dim H} \# \Pic(H)\sum_i\frac{r_H\cdot m^X_S(x_i\cdot G(F_S)\cap X(F_S,q^n))\prod_{v \notin S} m^X_v(x_i\St({\calX})_v)}{m^G_{S} (\Gamma \backslash G(F_S)) \prod_{v \notin S} m^G_v(\St({\calX})_v) }
\]
as $n\rightarrow \infty$ by Proposition \ref{sha}, \cite[Theorem~1.1]{Rosengarten21a} and \cite[Remark 1.4]{Rosengarten21b}.
By the strong approximation property for $G$, one has 
\[
q^{(1-\eta_F)\dim G}m^G_{S} (\Gamma \backslash G(F_S)) \prod_{v \notin S} m^G_v(\St({\calX})_v)=m^{G}(G(F)\backslash G(F)\St({\calX}))=m^{G}(G(F)\backslash G(\bfA_F))=1,
\]where the last equality is due to work of Gaitsgory and Lurie (see \cite{Gaitsgory-Lurie19}).
Then we have
\[
N({\calX}, q^n) \sim\#\Pic(H)\sum_i{q^{(1-\eta_F)\dim X}\cdot r_H\cdot m^X_S(x_i\cdot G(F_S)\cap X(F_S,q^n))\prod_{v \notin S} m^X_v(x_i\St({\calX})_v)}
\] as $n\rightarrow \infty$.
Hence
\[
N({\calX}, q^n) \sim\#\Pic(H) \int_{(\prod_{v \notin S}{\calX} ({\calO}_v) \times X(F_S, q^n))^{ \Br_{1,P}(X,G)}} \d m^{X}
\] as $n\rightarrow \infty$ by Proposition \ref{equiv}. The result then follows from Lemma \ref{3.1} and \ref{3.2}.
\end{proof}

\section{An example}\label{sec-example}



For simplicity, we assume in this section $F=\bF_q(t)$, and $S=\{\infty_F\}$ is just the infinite place given by the degree valuation. In particular, $\calO_S=\bF_q[t]$ and $F_S=F_{\infty_F}=\bF_q\laurent{t^{-1}}$. For $v\in\Omega_F$, we use $p_v$ to denote a fixed uniformizer of the complete local ring $\calO_v$, where it is understood that $p_v$ is the corresponding monic irreducible polynomial if $v\neq \infty_F$ and $p_v=t^{-1}$ for $v=\infty_F$.

As application of our result, we give the asymptotic formula for counting integral points on the $\calO_S$-scheme defined by \[
\calX: f(x_1,\ldots,x_r)=a,
\] where $f$ is a non-degenerate quadratic form in $r\geqslant4$ variables with coefficients in $\calO_S$ and $0\neq a\in\calO_S$. For this purpose, we further assume $f$ to be indefinite in the sense that it has non-trivial solutions over $F_{S}$. By \cite[Proposition~VI.1.9]{Lam05}, this is the case, e.g., if $r\geqslant5$ or $f$ is a sum of $r$ squares.

Since $r\geqslant4$ ensures that $f$ is universal over $F$, it follows that $X\coloneqq\calX\times_{\calO_S}F$ admits a rational point $P\in X(F)$ and hence is isomorphic to the symmetric homogeneous space $H\backslash G$, with $G\simeq \Spin(f)$ and $H\simeq \Spin(h)$ for some subform $h$ of $f$ of rank $(r-1)\geqslant3$ (cf. \cite[\S~5.3]{CT-Xu09} and \cite[\S~6.4]{Borovoi-Rudnick95}). Moreover, both $G$ and $H$ are semi-simple simply connected, and the indefiniteness assumption on $f$ implies that $G(F_S)$ is non-compact. As $\Br_{1,P}(X,G)=0$ by Lemma~\ref{3.1}, the asymptotic formula of $N(\calX,q^n)$ is reduced by Theorem~\ref{th2} to the computations in the following proposition.

\begin{prop}\label{localdensity}
The local measures of $X$ are computed as follows:
\begin{enumerate}
    \item[(1).] For any $v\neq \infty_F$ with $\deg(p_v)=d$, one has
    \[
    m_v^X(\calX(\calO_v))=\lim_{l\to\infty} \dfrac{\#\ab\{ x\in (\calO_v/p_v^l)^r: f(x)\equiv a\Mod{p_v^l} \}}{q^{ld(r-1)}}.
    \]

    \item[(2).] For $v=\infty_F$, one has
    \[
    m_v^X(X(F_v,q^n))=\lim_{l\to \infty} q^{nr+l}\cdot \vol\ab(\{x\in\calO_v^r: f(x)\equiv at^{-2n}\Mod{p_v^{2n+l}}\}).
    \]
    When $k$ is large enough, one has
    \[
    \vol\ab(\{x\in\calO_v^r: f(x)\equiv \frac{a}{t^{2n}}\Mod{p_v^{2n+l}}\})=\dfrac{\#\{ x\in (\calO_v/p_v^k)^r: f(x)\equiv\tfrac{a}{t^{2n}}\Mod{p_v^{2n+l}} \}}{q^{kr}}.
    \]
\end{enumerate}
    
\end{prop}

\begin{proof}
    We also denote by $f\colon \bA_F^r\to \bA_F^1$ the morphism defined by the given quadratic form $f$, then $f$ is smooth on $f^{-1}(U)$ where $U\coloneqq \bA_F^1\backslash \{0\}$. The same construction as in \cite[\S~1.3]{Borovoi-Rudnick95} gives a gauge form $\omega^s$ on the fibre $V_s\coloneqq f^{-1}(s)$ for any $s\in U$, which then induces a local measure $m_v^s$ on $V_s(F_v)$ for each $v\in\Omega_F$, with $m_v^a=m_v^X$ as $X=V_a$.

    From the construction of $\omega^s$, it follows that for any compactly supported function $\phi$ on $U(F_v)$ and $\psi$ on $f^{-1}(U(F_v))$ which are locally constant we have, for any $v\in\Omega_F$,
    \begin{equation}\label{fibreintegral}
        \int \phi(f(x))\psi(x)\d x_1\cdots\d x_r=\int \phi(s)\ab(\int_{V_s(F_v)} \psi \d m_v^s) \d s.
    \end{equation}

\begin{enumerate}
    \item[(1).]  First consider $v\neq \infty_F$. Put $B\coloneqq \calO_v^r\subseteq F_v^r$. For $l$ large enough, one has $B_l\coloneqq a+p_v^l\calO_v\subseteq U(F_v)$ since $a\neq0$, and $\vol(B_l)=[\calO_v:p_v^l\calO_v]^{-1}=q^{-ld}$. Take for $\psi$ the characteristic function of the compact open subset $B\cap f^{-1}(U(F_v))\subseteq F_v^r$, and for $\phi$ the characteristic function of $B_l\cap U(F_v)\subseteq F_v$. Then \eqref{fibreintegral} gives
    \[
    q^{-ldr}\#\ab\{ x\in (\calO_v/p_v^l)^r: f(x)\equiv a\Mod{p_v^l} \}=\int_{B_l} m_v^s(V_s(\calO_v)) \d s,
    \] and the desired formula follows after dividing by $\vol(B_l)=q^{-ld}$ and passing to the limit.

    \item[(2).] Now let $v=\infty_F$. In this case put $C\coloneqq \{x\in F_v^r: H_v(x)\leqslant q^n\}$. For $l$ large enough, one still has $B_l\coloneqq a+p_v^l\calO_v\subseteq U(F_v)$, and moreover $\vol(B_l)=q^{-l}$. Take this time for $\psi$ the characteristic function of $C\cap f^{-1}(U(F_v))$, and for $\phi$ again the characteristic function of $B_l\cap U(F_v)$. Then \eqref{fibreintegral} gives
    \begin{align*}
        \int_{B_l} m_v^s(V_s(F_v)\cap C)\d s=&\vol\ab(\{ x\in F_v^r: H_v(x)\leqslant q^n, f(x)\equiv a\Mod{p_v^{l}} \})\\=& q^{nr}\cdot \vol\ab(\{x\in\calO_v^r: f(x)\equiv at^{-2n}\Mod{p_v^{2n+l}}\}).
    \end{align*}
    From this the desired formula follows after again dividing by $\vol(B_l)=q^{-l}$ and passing to the limit.
    
    Finally, it is easily seen that $f(x)\equiv f(x+z)\Mod{p_v^{2n+l}}$ if the coordinates of $z$ are divisible by a sufficiently large power of $p_v$ (more precisely, once $z\in (p_v^{2n+l-\delta}\calO_v)^r$, $\delta$ being the smallest $v$-adic valuation among the coefficients of $f$), and thus \[
    f(x)\equiv at^{-2n}\quad \text{ if and only if } \quad f(x+z)\equiv at^{-2n}\Mod{p_v^{2n+l}},
    \]whence the last assertion.
    \qedhere
\end{enumerate}
    
\end{proof}

\begin{remark}
It's known that one can also use the circle method to obtain the above proposition. When $f$ is a sum of squares, an exact formula for $N(\calX,q^n)$ has been obtained in \cite[Theorem~3.9]{Merrill-Walling93} as the Fourier coefficients of a theta function.
\end{remark}

\bibliographystyle{alpha}
\bibliography{ref}

\

Sheng Chen

\medskip

School of Mathematics and Statistics

Changchun University of Science and Technology


Changchun 130022, China


Email: chenshen1991@cust.edu.cn

\

Jing Liu

\medskip

School of Sciences

Great Bay University


Dongguan 523000, China


Email: liuj@gbu.edu.cn

\

\end{document}